\DeclareMathOperator{\Tor}{Tor}
\newtheorem{thm}{Theorem}[section] 
\newtheorem{lemma}[thm]{Lemma}     
\newtheorem{cor}[thm]{Corollary}
\newtheorem{prop}[thm]{Proposition}
\theoremstyle{definition}
\newtheorem{defn}[thm]{Definition}
\theoremstyle{definition}
\newtheorem{ex}[thm]{Example}
\theoremstyle{definition}
\newtheorem{remk}[thm]{Remark}
\theoremstyle{definition}
\theoremstyle{definition}
\newtheorem{quest}[thm]{Question}
\title{\large{\uppercase{Generators of Koszul Homology with Coefficients in a} $\underline{g}$-\uppercase{Weak Complete Intersection Module}}}
\author{\normalsize{\uppercase{Rachel N. Diethorn}}}
\date{}
\begin{document}

\maketitle

\begin{abstract}
We discuss a class of modules, which we call $\underline{g}$-weak complete intersection modules, inspired by the weak complete intersection ideals studied by Rahmati, Striuli, and Yang and we present explicit formulas for the generators of Koszul homology with coefficients in a $\underline{g}$-weak complete intersection module.  This generalizes work of Herzog and of Corso, Goto, Huneke, Polini, and Ulrich.  We use these explicit formulas to study connections between $\underline{g}$-weak complete intersection ideals and weak complete intersection ideals.
\end{abstract}

\section{Introduction}
\paragraph
\indent  Let $Q$ be a commutative, Noetherian $k$-algebra where $k$ is a field of characteristic zero and let $\underline{g}=g_1,...,g_s$ be a regular sequence in $Q$.  In this paper, we study a class of ideals which we call $\underline{g}$-weak complete intersection ideals, inspired by the weak complete intersection ideals studied by Rahmati, Striuli, and Yang in their recent paper \cite{RAHMATI2018129}.  Weak complete intersection ideals in a local ring are the ideals $I$ such that the differentials in the minimal free resolution $F$ of the quotient $Q/I$ land in $IF$.  Given a regular sequence $\underline{g}$ in a local ring $Q$, we define $\underline{g}$-weak complete intersection ideals to be the ideals $I$ such that the differentials in the minimal free resolution of $Q/I$ land in $(\underline{g})F$.  We define the more general notion of a $\underline{g}$-weak complete intersection module in a similar way.   Expanding the allowed range for the differentials allows additional flexibility, which as we show in this paper, becomes useful in studying weak complete intersection ideals.   Of course, when Q is regular and $\underline{x}$ is a regular system of parameters (that is, it minimally generates the maximal ideal), then any ideal is an $\underline{x}$-weak complete intersection ideal, but, even under less restrictive hypotheses, interesting examples are abundant.  We observe that for a given regular sequence $\underline{g}$, the intersection of the class of $\underline{g}$-weak complete intersection ideals and the class of weak complete intersection ideals lies inside the the class of complete intersection ideals.  Although neither class of ideals is contained in the other, we find other connections between them.  To accomplish this, we study the Koszul homology $H(\underline{g};R)$ where $R=Q/I$ is the quotient by a $\underline{g}$-weak complete intersection ideal.  
\newline
\indent One approach to understanding Koszul homology is to find, explicitly, its generators.  In his 1991 paper \cite{MR1310371} (see also \cite{Herzog2018}), Herzog gives explicit formulas for generators of the homology $H(\underline{x};R)$ of the Koszul complex on the minimal generators of the of the irrelevant maximal ideal $m=(x_1,...,x_n)$ of a finitely generated graded $k$-algebra $R$, where $k$ is a field of characteristic zero. Recently, the authors of \cite{MR3717974} provided explicit formulas for generators of Koszul homology in a more general setting, namely they studied the homology of the Koszul complex on a sequence $x_1^{a_1},...,x_n^{a_n}$ with $M$-coefficients where $M$ is what we call a $\underline{g}$-weak complete intersection module throughout this paper, with $\underline{g}=x_1^{a_1},...,x_n^{a_n}$.  In this paper, we further generalize to the setting where $\underline{g}$ is any full regular sequence and $M$ is a $\underline{g}$-weak complete intersection module.  One of the main tools we use to obtain these explicit formulas is the classical Perturbation Lemma from homological algebra.  We also utilize theory developed in \cite{dyckerhoff2013} on the formulation of a sort of partial derivative with respect to our regular sequence $\underline{g}$ and the de Rham contraction built from it.  The formulas we provide are given in terms of these partials.     
\newline
\indent  The explicit formulas for generators of Koszul homology given in this paper become useful in studying the connections between $\underline{g}$-weak complete intersection ideals and weak complete intersection ideals, where $\underline{g}$ is any full regular sequence.  In particular, we study the question of when $(\underline{g})$ is a weak complete intersection ideal in the quotient by a $\underline{g}$-weak complete intersection ideal and we give a sufficient condition involving the partials with respect to $\underline{g}$ mentioned above.  This provides a new family of examples of weak complete intersections.   
\newline
\indent  We now outline the contents of this paper.  In Section 2, we introduce the notion of $\underline{g}$-weak complete intersection modules and ideals and give some examples.  In Proposition \ref{propfpd}, we show  that for a given regular sequence $\underline{g}$, the intersection of the class of $\underline{g}$-weak complete intersection ideals and the class of weak complete intersection ideals lies inside the the class of complete intersection ideals. In the remainder of Section 2, we introduce the main tools we use to obtain formulas for the generators Koszul homology, including the perturbation lemma and the formulation of the de Rham contraction from \cite{dyckerhoff2013}.  In Section 3, we provide explicit formulas for the generators of Koszul homology $H(\underline{g};M)$ where $M$ is a $\underline{g}$-weak complete intersection module.  The formulas can be found in Theorem \ref{mainthm} and Corollaries \ref{cor1} and \ref{cor2}.  In Section 4, we utilize the explicit formulas from the previous section to study the question of when $(\underline{g})$ is a weak complete intersection ideal in the quotient $R=Q/I$ where $I$ is a $\underline{g}$-weak complete intersection ideal.  We give a sufficient condition in Proposition \ref{propapp} and show it is not necessary in Example \ref{exconverse}.

\section{Preliminaries}
\paragraph
\indent  In this section, we introduce the notion of $\underline{g}$-weak complete intersection ideals and modules.  We then discuss the main tools used throughout the paper, including the Perturbation Lemma and a version of the de Rham contraction developed in \cite{dyckerhoff2013}.

\subsection{$\underline{g}$-Weak Complete Intersection Ideals and Modules}
\paragraph
\indent  In this section, we introduce a class of ideals which we call $\underline{g}$-weak complete intersection ideals, inspired by the weak complete intersection ideals defined in \cite{RAHMATI2018129}.  We also introduce the related notion of $\underline{g}$-weak complete intersection modules.  Let $Q$ be a commutative, Noetherian $k$-algebra and let $\underline{g}=g_1,...,g_s$ be a regular sequence in $Q$.   

\begin{defn}\label{defwci}
A finitely generated $Q$-module $M$ is a \textit{$\underline{g}$-weak complete intersection module} if there is a free resolution $(F,\partial)$ of $M$ which satisfies the property: Im$\,\partial_i\subseteq (\underline{g})F_{i-1}$ for every $i$.  We call an ideal $I\subseteq Q$ a \textit{$\underline{g}$-weak complete intersection ideal} if $Q/I$ is a $\underline{g}$-weak complete intersection module.   
\end{defn}

In the local case, we note that it suffices to consider the minimal free resolution.  We now give some examples of $\underline{g}$-weak complete intersection ideals.

\begin{ex}
    The complete intersection ideal $(\underline{g})\subseteq Q$ is a $\underline{g}$-weak complete intersection ideal.  Indeed, $Q/(\underline{g})$ is minimally resolved by the Koszul complex on $\underline{g}$, whose differentials certainly land in the ideal $(\underline{g})$.  More generally, any embedded complete intersection ideal $(\underline{f})\subseteq (\underline{g})$ is also a $\underline{g}$-weak complete intersection ideal for the same reason.  In a regular ring, every module is an $\underline{x}$-weak complete intersection, where $\underline{x}$ is a minimal set of generators for the maximal ideal. 
\end{ex}

The next example gives a large class of non-complete intersection ideals.

\begin{ex}
Let $Q$ be a polynomial ring and fix a regular sequence $\underline{g}$ in $Q$.  Then any ideal generated by monomials in $\underline{g}$  is a $\underline{g}$-weak complete intersection ideal.  Indeed, $Q/I$ is resolved (possibly non-minimally) by the Taylor resolution for monomials in a regular sequence.  The entries in the differentials are either monomials in $\underline{g}$ or units.  After change of bases, the minimal free resolution splits off.  The entries in the differentials of the minimal resolution are still contained in $\underline{g}$ as the appropriate row and column operations do not disturb this property. 
\end{ex}

The next example shows that a $\underline{g}$-weak complete intersection ideal need not be a monomial ideal. 

\begin{ex}\label{expowersvars}
Let $Q=k[x,y,z]$ be a polynomial ring and take $\underline{g}$ to be the regular sequence $\underline{g}=x^2,y^3,z^5$.  Let $I=(x^2y^4+y^3z^7, y^6, x^4y^2)$.  According to Macaulay2, a free resolution of $Q/I$ over $Q$ is given by
\begin{align*}
    0\rightarrow Q\overset{\partial_3}{\longrightarrow} Q^3\overset{\partial_2}{\longrightarrow} Q^3\overset{\partial_1}{\longrightarrow} Q\rightarrow Q/I\rightarrow 0
\end{align*}
where the differentials are given by the following matrices:
\begin{align*}
\partial_3=
\left[
\begin{array}{c}
-z^7-x^2y \\
-x^4 \\
y^3
\end{array}
\right],\thinspace\thinspace\thinspace
&\partial_2=
\left[
\begin{array}{c c c}
-y^4 & 0 & -yz^7-x^2y^2 \\
x^4 & -z^7-x^2y & 0 \\
0 & y^3 & x^4
\end{array}
\right],
\end{align*}
\begin{align*}
\partial_1=&
\left[
\begin{array}{c c c}
x^4y^2 & y^6 & x^2y^4+y^3z^7 \\
\end{array}
\right] 
\end{align*}
It is easy to see that $I$ is a $\underline{g}$-weak complete intersection ideal.
\end{ex}

Although the definition of $\underline{g}$-weak complete intersection ideals was inspired by the definition of weak complete intersection ideals, for a fixed regular sequence $\underline{g}$, the two classes of ideals are distinct and neither one is contained in the other.  In fact, the intersection of the two classes is contained in the class of complete intersection ideals, as shown in the following proposition.

\begin{prop}\label{propfpd}
Let $Q$ be a local (or graded) ring and let $\underline{g}$ be a (homogeneous) regular sequence in $Q$.  
\begin{itemize}
\item[(1)] Every finitely generated $\underline{g}$-weak complete intersection module has finite projective dimension. 
\item[(2)] A weak complete intersection ideal is a $\underline{g}$-weak complete intersection ideal if and only if it is a complete intersection ideal embedded in $(\underline{g})$. 
\end{itemize}  
\end{prop}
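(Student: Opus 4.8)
The plan is to prove the two parts in order, using part~(1) to supply the finiteness hypothesis needed in part~(2). For part~(1), the key observation is that reducing a witnessing resolution modulo $(\underline{g})$ collapses it. Let $(F,\partial)$ be a free resolution of $M$ with $\operatorname{Im}\partial_i\subseteq(\underline{g})F_{i-1}$ for all $i$. Tensoring with $Q/(\underline{g})$ makes every induced differential on $F\otimes_Q Q/(\underline{g})=F/(\underline{g})F$ vanish, so $\Tor_i^Q(M,Q/(\underline{g}))\cong F_i/(\underline{g})F_i$ for all $i$. Since $\underline{g}$ is a regular sequence of length $s$, the Koszul complex on $\underline{g}$ is a free resolution of $Q/(\underline{g})$ of length $s$, whence $\Tor_i^Q(M,Q/(\underline{g}))=0$ for $i>s$. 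Comparing the two computations gives $F_i/(\underline{g})F_i=0$ for $i>s$; as the terms of a regular sequence are nonunits, $(\underline{g})$ lies in the (graded) maximal ideal and Nakayama forces $F_i=0$ for $i>s$. Hence $\pd_Q M\le s<\infty$.

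For part~(2), the backward implication is the embedded complete intersection Example: if $I=(\underline{f})\subseteq(\underline{g})$ is a complete intersection, the Koszul complex on $\underline{f}$ resolves $R=Q/I$ with differential entries in $(\underline{f})\subseteq(\underline{g})$, so $I$ is a $\underline{g}$-weak complete intersection. For the forward implication, suppose $I$ is both a weak and a $\underline{g}$-weak complete intersection and let $(F,\partial)$ be the minimal free resolution of $R$. Applying the $\underline{g}$-weak condition to $\partial_1$ gives $I=\operatorname{Im}\partial_1\subseteq(\underline{g})$, so $I$ is embedded in $(\underline{g})$. By part~(1), $\pd_Q R<\infty$. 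The weak complete intersection condition $\operatorname{Im}\partial_i\subseteq IF_{i-1}$ means that tensoring $F$ with $R$ kills all differentials, so $\Tor_i^Q(R,R)\cong F_i\otimes_Q R$ is a free $R$-module for every $i$; in degree one, $\Tor_1^Q(R,R)\cong I/I^2$, so the conormal module is $R$-free. I would then invoke the classical criterion of Vasconcelos, that an ideal of finite projective dimension with free conormal module is generated by a regular sequence, to conclude that $I$ is a complete intersection. Together with $I\subseteq(\underline{g})$, this shows $I$ is a complete intersection embedded in $(\underline{g})$.

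The routine ingredients are the vanishing of the reduced differentials and the identification $\Tor_1^Q(R,R)\cong I/I^2$, which drops out of the long exact $\Tor$ sequence associated to $0\to I\to Q\to R\to 0$. The step carrying the real weight is the passage from ``all self-$\Tor$ free'' to ``complete intersection'': both inputs to Vasconcelos' criterion must be in place, namely finite projective dimension (supplied by part~(1)) and freeness of $I/I^2$ (supplied by the weak condition), and verifying that these hypotheses genuinely hold in the local or graded setting at hand is the main obstacle.
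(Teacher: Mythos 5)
Your argument is correct, and its skeleton matches the paper's exactly: part (1) is the same computation (tensoring the witnessing resolution with $Q/(\underline{g})$ kills the differentials, the Koszul resolution bounds the $\Tor$'s, and Nakayama finishes), and in part (2) you extract $I\subseteq(\underline{g})$ from the first differential and feed the finite projective dimension from (1) into the final step in the same way. The one place you diverge is that final step: the paper simply cites \cite[Remark 2.4]{RAHMATI2018129} for ``weak complete intersection of finite projective dimension $\Rightarrow$ complete intersection,'' whereas you reprove it by observing that the weak condition makes $\Tor_1^Q(R,R)\cong I/I^2$ a free $R$-module and then invoking Vasconcelos' criterion (finite projective dimension plus free conormal module implies the ideal is generated by a regular sequence). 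Your identification of $\Tor_1^Q(R,R)$ with $I/I^2$ via the long exact sequence for $0\to I\to Q\to R\to 0$ is right, and Vasconcelos' theorem applies in both the local and graded settings, so the argument closes; what it buys you is a self-contained proof at the cost of invoking a deeper classical theorem that the cited remark packages for free. One small point worth making explicit in either version: the two hypotheses on $I$ are witnessed a priori by possibly different resolutions, so one should note (as the paper's remark after Definition \ref{defwci} does) that the minimal free resolution inherits both conditions before reading off $I\subseteq(\underline{g})$ and the vanishing of $\partial\otimes R$ from it.
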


\begin{proof}  We give a proof of the local case; the graded case is similar. \newline
(1)  Let $M$ be a finitely generated $\underline{g}$-weak complete intersection module and let $F$ be its minimal free resolution over $Q$.  Then we have
\begin{align*}
H_{\ell}(\underline{g};M)&=H_{\ell}(K(\underline{g};Q)\otimes_Q M) \\
&=\Tor_{\ell}^Q(Q/(\underline{g}),M) \\
&=H_{\ell}(Q/(\underline{g})\otimes_Q F) \\
&=Q/(\underline{g})\otimes_Q F_{\ell}
\end{align*} 
where the second equality follows from the fact that $\underline{g}$ is a regular sequence and the last equality follows directly from Definition \ref{defwci}.  Note that $H_{\ell}(\underline{g};M)=0$ for all $\ell>s$, where $\underline{g}=g_1,...,g_s$.  Then $Q/(\underline{g})\otimes_Q F_{\ell}=0$ by above and hence $F_{\ell}=0$ for all $\ell>s$ by Nakayama's Lemma.  Thus, $\text{pd}_Q M<\infty$. \newline
(2)  Let $I$ be a weak complete intersection ideal and suppose it is also a $\underline{g}$-weak complete intersection ideal.  Note that the minimal generators of $I$ are contained in the ideal $(\underline{g})$ since they are the entries in the first differential of the minimal free resolution $F$ of $Q/I$ over $Q$.  And by (1), $\,\text{pd}_Q Q/I<\infty$, thus $I$ is a complete intersection ideal by \cite[Remark 2.4]{RAHMATI2018129}.  The other direction is clear.      
\end{proof}
 
In Section 3, we study the homology of the Koszul complex on $\underline{g}$ with coefficients in a $\underline{g}$-weak complete intersection module and in Section 4, we investigate further the connection between $\underline{g}$-weak complete intersection ideals and weak complete intersection ideals.

\subsection{The Perturbation Lemma}
\paragraph
\indent In this section, we discuss the classical perturbation lemma which we will use in Section 3 as the main tool for providing explicit formulas for the generators of the homology of the Koszul complex on $\underline{g}$ with coefficients in a $\underline{g}$-weak complete intersection module.  We begin with the relevant definitions, which can be found in \cite{2004math......3266C}.  

\begin{defn}
A \textit{deformation retract datum}
\begin{align*}
    \Big((F,\partial_F)\underset{i}{\overset{p}{\leftrightarrows}}(G,\partial_G), H\Big)
\end{align*}
consists of the following:
\begin{description}
    \item[(i)] complexes $(F,\partial_F)$ and $(G,\partial_G)$
    \item[(ii)] quasi-isomorphisms $p$ and $i$
    \item[(iii)]  a homotopy $H$ between $ip$ and $\text{Id}_G$ (ie.\thinspace\thinspace $\partial_G H+H\partial_G=ip-\text{Id}_G$)
\end{description}
such that $pi=\text{Id}_G$.  A \textit{special deformation retract datum} is a deformation retract datum which also satisfies $Hi=0$, $pH=0$, and $H^2=0$.
\end{defn}

Given a deformation retract datum, one can define a (small) perturbation of the datum as follows.

\begin{defn}
A \textit{perturbation} of a deformation retract datum is a map 
\begin{align*}
    G\overset{\epsilon}{\longrightarrow}G
\end{align*}
of the same degree as $\partial_G$, such that $(\partial_G+\epsilon)^2=0$.  The perturbation is \textit{small} if $\text{Id}_G-\epsilon H$ is an invertible map.
\end{defn}

Now we state the perturbation lemma; see for example \cite[2.4]{2004math......3266C}.

\begin{thm}
[\textbf{Perturbation Lemma}]  If $\epsilon$ is a small perturbation of the deformation retract datum
\begin{align*}
     \Big((F,\partial_F)\underset{i}{\overset{p}{\leftrightarrows}}(G,\partial_G), H\Big)
\end{align*}
then the perturbed datum 
\begin{align*}
     \Big((F,\widetilde{\partial_F})\underset{\tilde{i}}{\overset{\tilde{p}}{\leftrightarrows}}(G,\partial_G+\epsilon), \tilde{H}\Big)
\end{align*}
with 
\begin{align*}
    \widetilde{\partial_F}=\partial_F+pAi,\thinspace\thinspace\thinspace \tilde{p}=p+pAH,\thinspace\thinspace\thinspace \tilde{i}=i+HAi,\thinspace\thinspace\thinspace \tilde{H}=H+HAH
\end{align*}
where $A=(\text{Id}_G-\epsilon H)^{-1}\epsilon$, is a deformation retract datum.  In particular, $\tilde{i}$ is a homotopy equivalence.
\end{thm}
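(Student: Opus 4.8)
The plan is to verify directly that the perturbed data satisfy the three defining conditions of a deformation retract datum, using as the computational engine a pair of \emph{resolvent identities} for $A$. Writing $A=(\text{Id}_G-\epsilon H)^{-1}\epsilon$, the relation $(\text{Id}_G-\epsilon H)A=\epsilon$ gives $A=\epsilon+\epsilon H A$, while the twin expression $A=\epsilon(\text{Id}_G-H\epsilon)^{-1}$ (valid because $\epsilon(\text{Id}_G-H\epsilon)=(\text{Id}_G-\epsilon H)\epsilon$) gives $A=\epsilon+AH\epsilon$. These two recursions let me trade an occurrence of $A$ for $\epsilon$ at the cost of one factor of $H$, and it is exactly this exchange that produces the telescoping cancellations below. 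I would establish the twin identity first, noting that the invertibility of $\text{Id}_G-\epsilon H$ (smallness) and of $\text{Id}_G-H\epsilon$ are equivalent.

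Before computing I would list the structural identities to be used repeatedly: the chain-map relations $p\partial_G=\partial_F p$ and $\partial_G i=i\partial_F$; the retraction $pi=\text{Id}_F$; the homotopy relation $\partial_G H+H\partial_G=ip-\text{Id}_G$; the side conditions $Hi=0$, $pH=0$, $H^2=0$ of a special deformation retract, which the given formulas require and which may be assumed after the standard reduction; and the perturbation condition $(\partial_G+\epsilon)^2=0$, equivalently $\partial_G\epsilon+\epsilon\partial_G=-\epsilon^2$.

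I would then dispatch the conditions in increasing order of difficulty. The retraction $\tilde{p}\tilde{i}=\text{Id}_F$ is quickest: expanding $(p+pAH)(i+HAi)$ produces the three cross terms $pHAi$, $pAHi$, and $pAH^2Ai$, which vanish by $pH=0$, $Hi=0$, and $H^2=0$ respectively, leaving $pi=\text{Id}_F$. The chain-map conditions $\tilde{p}(\partial_G+\epsilon)=\widetilde{\partial_F}\tilde{p}$ and $(\partial_G+\epsilon)\tilde{i}=\tilde{i}\widetilde{\partial_F}$, and the new homotopy relation $(\partial_G+\epsilon)\tilde{H}+\tilde{H}(\partial_G+\epsilon)=\tilde{i}\tilde{p}-\text{Id}_G$, are then checked by the same mechanism: expand, commute $\partial_G$ past $H$ using the homotopy relation (introducing $ip-\text{Id}_G$), commute $\partial_G$ past $\epsilon$ using the perturbation condition (introducing $-\epsilon^2$), and reassemble the resulting $\epsilon+\epsilon HA$ (or $\epsilon+AH\epsilon$) back into $A$ via the resolvent identities. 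The centerpiece is $\widetilde{\partial_F}^2=0$: here $(\partial_F+pAi)^2$ expands to $\partial_F^2=0$, plus $p(\partial_G A+A\partial_G)i$ after using the chain-map relations, plus the quadratic term $pA(ip)Ai$, in which I would substitute $ip=\partial_G H+H\partial_G+\text{Id}_G$ and then collapse everything using the perturbation and resolvent identities.

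The main obstacle is precisely the bookkeeping in $\widetilde{\partial_F}^2=0$ and in the new homotopy relation: the cancellations are not visible term by term, and one must apply the resolvent identities at the right moments, rewriting $A$ as $\epsilon+\epsilon HA$ to expose a factor that the perturbation condition $\partial_G\epsilon+\epsilon\partial_G=-\epsilon^2$ can absorb, and invoking the side conditions to kill the spurious $H$-terms so that the series telescopes. Once the perturbed data are shown to form a genuine deformation retract datum, the final assertion is immediate, since $\tilde{i}$, being one half of a deformation retraction, is a homotopy equivalence and hence in particular a quasi-isomorphism.
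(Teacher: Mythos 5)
Your proposal is correct in substance, but note that the paper itself offers no proof of this statement: it is quoted as the classical Perturbation Lemma with a citation to Crainic's note, and your direct verification via the resolvent identities $A=\epsilon+\epsilon HA=\epsilon+AH\epsilon$ is precisely the argument carried out in that reference, so there is no divergence of method to report. The one point worth flagging is the role of the side conditions $Hi=0$, $pH=0$, $H^2=0$: as you observe, the verification of $\tilde{p}\tilde{i}=\text{Id}_F$ genuinely uses them, so the lemma as literally stated (for a plain deformation retract datum) only returns a homotopy equivalence datum unless one either assumes the datum is special or first performs the standard replacement of $H$ by a side-condition-compatible homotopy --- and in the latter case the displayed formulas must be read with the modified $H$. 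This caveat is harmless for the paper, since the datum to which the lemma is applied (Lemma 3.1, built from Theorem 2.12) is special, but your write-up should state explicitly that you prove the special case, which is the only case used. The remaining computations you outline ($\widetilde{\partial_F}^2=0$, the chain-map identities, and the perturbed homotopy relation) do telescope exactly as you describe: for instance $\widetilde{\partial_F}^2$ reduces to $p\big(\partial_G A+A\partial_G+A(ip)A\big)i$, and substituting $ip=\partial_G H+H\partial_G+\text{Id}_G$ together with $HA=(\text{Id}_G-H\epsilon)^{-1}-\text{Id}_G$ and $AH=(\text{Id}_G-\epsilon H)^{-1}-\text{Id}_G$ collapses it to $(\text{Id}_G-\epsilon H)^{-1}(\partial_G\epsilon+\epsilon\partial_G+\epsilon^2)(\text{Id}_G-H\epsilon)^{-1}=0$ by the perturbation condition. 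So the plan is sound and complete once the special-datum hypothesis is made explicit.
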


In Section 3, we modify a deformation retract datum constructed in \cite{dyckerhoff2013}, and apply the perturbation lemma to provide explicit formulas for generators of Koszul homology.

\subsection{The de Rham Contraction}
\paragraph
\indent In this section, we utilize theory developed in \cite{dyckerhoff2013} on connections to formulate a sort of partial derivative with respect to the elements of a regular sequence $\underline{g}$.  We use these partial derivatives to give explicit formulas for the generators of the homology of the Koszul complex on $\underline{g}$ with coefficients in a $\underline{g}$-weak complete intersection module.
\newline
\indent Throughout this section we assume that $k$ a field of characteristic zero and $Q$ is a Noetherian $k$-algebra.  We let $\underline{g}$ be a regular sequence in $Q$ such that $Q/(\underline{g})$ is a finite dimensional $k$-vector space and $Q$ is complete\footnote{As in \cite{MR1011461}, we use the term ``complete'' to mean ``complete and separated''} in the $(\underline{g})$-adic topology.  We note that $Q$ is an algebra over the polynomial ring $k[\underline{g}]$ via the inclusion map. 
\newline
\indent We let $\Omega^1_{k[\underline{g}]/k}$ be the module of Kahler differentials, namely
\begin{align*}
    \Omega^1_{k[\underline{g}]/k}=\bigoplus_{i=1}^s k[\underline{g}]dg_i,
\end{align*} 
and we denote by $\Omega_{k[\underline{g}]/k}$ the exterior algebra over $\Omega^1_{k[\underline{g}]/k}$ with differential induced by the Euler map $\Omega^1_{k[\underline{g}]/k}\rightarrow k[\underline{g}]$ sending $dg_i$ to $g_i$.  Then the Koszul complex on $\underline{g}$ over $Q$ is given by $Q\underset{k[\underline{g}]}{\otimes}\Omega_{k[\underline{g}]/k}$.  Indeed,
\begin{align*}
    Q\underset{k[\underline{g}]}{\otimes}\Omega_{k[\underline{g}]/k}&=Q\underset{k[\underline{g}]}{\otimes}\bigwedge\Omega^1_{k[\underline{g}]/k} \\
    &=Q\underset{k[\underline{g}]}{\otimes}k[\underline{g}]\langle dg_1,...,dg_s|\delta(dg_i)=g_i\rangle \\
    &=Q\langle dg_1,...,dg_s|\delta(dg_i)=g_i\rangle.
\end{align*}
Let $\pi:Q\rightarrow Q/(\underline{g})$ be the usual quotient map and fix a $k$-linear splitting 
\begin{align*}
    \sigma:Q/(\underline{g})\rightarrow Q
\end{align*}
of $\pi$.  The following lemma is a well known; see for example \cite[Appendix B]{dyckerhoff2013} or \cite[Lemma 3.1.1]{MR868864}.  As a proof is not given in either source above, we include a proof here to clarify the constructions in this section.

\begin{lemma}\label{lemmalipman}
For every element $q\in Q$, there exist unique residue classes $\bar{q_N}\in Q/(\underline{g})$ such that
\begin{align*}
    q=\sum_{N\in\mathbb{N}^s}\sigma(\bar{q_N})g^N.
\end{align*}
where $g^N=g_1^{N_1}...g_s^{N_s}$.
\end{lemma}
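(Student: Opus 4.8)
The plan is to reduce the statement to the well-known description of the associated graded ring of $Q$ along $(\underline{g})$. Since $\underline{g}=g_1,\dots,g_s$ is a regular sequence, the associated graded ring $\operatorname{gr}_{(\underline{g})}(Q)=\bigoplus_{n\ge 0}(\underline{g})^n/(\underline{g})^{n+1}$ is isomorphic, as a graded $Q/(\underline{g})$-algebra, to the polynomial ring $(Q/(\underline{g}))[T_1,\dots,T_s]$ via $T_i\mapsto \overline{g_i}\in(\underline{g})/(\underline{g})^2$; this is a standard consequence of the regular-sequence hypothesis. The precise fact I would extract is that, for each $n$, the graded piece $(\underline{g})^n/(\underline{g})^{n+1}$ is a free $Q/(\underline{g})$-module whose basis is the (finitely many) monomial classes $\overline{g^N}$ with $|N|=n$.

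For existence, I would build the residue classes $\overline{q_N}$ by induction on $n=|N|$ so that the partial sums approximate $q$ in the $(\underline{g})$-adic topology. Setting $\overline{q_0}=\pi(q)$ gives $q-\sigma(\overline{q_0})\in(\underline{g})$. Assuming $\overline{q_N}$ has been chosen for all $|N|<n$ so that $q-\sum_{|N|<n}\sigma(\overline{q_N})g^N\in(\underline{g})^n$, I would expand the image of this element in $(\underline{g})^n/(\underline{g})^{n+1}$ uniquely along the basis $\{\overline{g^N}:|N|=n\}$, reading off the coefficients $\overline{q_N}\in Q/(\underline{g})$; by construction $q-\sum_{|N|\le n}\sigma(\overline{q_N})g^N\in(\underline{g})^{n+1}$. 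The partial sums $S_n=\sum_{|N|\le n}\sigma(\overline{q_N})g^N$ then satisfy $q-S_n\in(\underline{g})^{n+1}$, so they are Cauchy, and completeness of $Q$ in the $(\underline{g})$-adic topology guarantees that $\sum_N\sigma(\overline{q_N})g^N$ converges to $q$.

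For uniqueness, I would pass to a representation of zero: if $\sum_N\sigma(\overline{q_N})g^N=\sum_N\sigma(\overline{q_N'})g^N$, then $k$-linearity of $\sigma$ yields $\sum_N\sigma(\overline{r_N})g^N=0$ with $\overline{r_N}=\overline{q_N}-\overline{q_N'}$. If some coefficient were nonzero, I would take the least degree $n_0$ for which $\overline{r_N}\ne 0$ for some $|N|=n_0$ and reduce the relation modulo $(\underline{g})^{n_0+1}$: this kills every term of degree $>n_0$ and, since the lower-degree coefficients vanish, leaves $\sum_{|N|=n_0}\overline{r_N}\,\overline{g^N}=0$ in $(\underline{g})^{n_0}/(\underline{g})^{n_0+1}$. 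Freeness of this graded piece forces all $\overline{r_N}=0$ with $|N|=n_0$, a contradiction, so the expansion is unique.

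The main obstacle is the structural input in the first paragraph: the entire argument rests on the polynomiality of $\operatorname{gr}_{(\underline{g})}(Q)$, equivalently on the $Q/(\underline{g})$-linear independence of the monomial classes $\overline{g^N}$ of a fixed degree, which is exactly the point at which the regular-sequence hypothesis is used. Once that is granted, the induction and the convergence argument are routine, with separatedness of the $(\underline{g})$-adic topology entering only through the degree bookkeeping that underlies the uniqueness step.
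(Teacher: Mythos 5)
Your proposal is correct and follows essentially the same route as the paper's proof: both arguments rest on the freeness of each graded piece $(\underline{g})^n/(\underline{g})^{n+1}$ over $Q/(\underline{g})$ with basis the degree-$n$ monomial classes in $\overline{g_1},\dots,\overline{g_s}$ (your polynomiality of $\operatorname{gr}_{(\underline{g})}(Q)$ is just a global repackaging of this), construct the coefficients by induction on degree so that $q-S_n\in(\underline{g})^{n+1}$, invoke completeness and separatedness to identify the limit with $q$, and prove uniqueness by isolating the lowest degree where two expansions could differ and reading off the coefficients in the corresponding graded piece. The only cosmetic differences are your "least offending degree" phrasing of uniqueness versus the paper's explicit induction, and your attribution of separatedness solely to the uniqueness step, whereas it is equally what forces the constructed series to sum to $q$ itself in the existence step.
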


\begin{proof}
We begin by  noting that $\pi(q-\sigma(\bar{q}))=\bar{q}-\pi(\sigma(\bar{q}))=\bar{q}-\bar{q}=0$, where the second equality follows from the fact that $\sigma$ is a splitting and where $\bar{q}=\pi(q)$.  Hence, $q-\sigma(\bar{q})\in (\underline{g})$.
\newline
\indent Let $\pi_i\colon (\underline{g})^i\rightarrow (\underline{g})^i/(\underline{g})^{i+1}$ be the usual quotient maps.  Since $\underline{g}$ is a regular sequence, we have that $(\underline{g})^i/(\underline{g})^{i+1}$ is a free $Q/(\underline{g})$-module.  We now use induction on $i$.  We know that $\bar{g_1},...,\bar{g_s}$ is a basis for $(\underline{g})/(\underline{g})^{2}$ over $Q/(\underline{g})$.  Thus, we can write 
\begin{align*}
    \pi_1(q-\sigma(\bar{q}))&=\sum_{i=1}^s \bar{q_{i}}\bar{g_i}  \\
    &=\sum_{\substack{N=(n_1,...,n_s) \\ n_1+...+n_s=1}}\bar{q_N}\bar{g}^N
\end{align*}
for $\bar{q_{i}}\in Q/(\underline{g})$.
\newline
\indent Now we suppose that
\begin{align*}
    q-\Bigg(\sum_{\substack{N=(n_1,...,n_s) \\ n_1+...+n_s< m-1}}\sigma(\bar{q_N})g^N\Bigg)\in(\underline{g})^{m-1}
\end{align*}
and 
\begin{align*}
    \pi_{m-1}\Bigg(q-\Bigg(\sum_{\substack{N=(n_1,...,n_s) \\ n_1+...+n_s< m-1}}\sigma(\bar{q_N})g^N\Bigg)\Bigg)=\sum_{\substack{N=(n_1,...,n_s) \\ n_1+...+n_s=m-1}}\bar{q_N}g^N.
\end{align*}
We note that
\begin{align*}
    \pi_{m-1}\Bigg(\sum_{\substack{N=(n_1,...,n_s) \\ n_1+...+n_s= m-1}}\sigma(\bar{q_N})g^N\Bigg)&=\sum_{\substack{N=(n_1,...,n_s) \\ n_1+...+n_s= m-1}}\bar{q_N}\bar{g}^N \\
    &= \pi_{m-1}\Bigg(q-\Bigg(\sum_{\substack{N=(n_1,...,n_s) \\ n_1+...+n_s< m-1}}\sigma(\bar{q_N})g^N\Bigg)\Bigg)
\end{align*}
where the first equality follows from the fact that $\sigma$ is a splitting and the second follows from the inductive hypothesis.  Thus 
\begin{align*}
    q-\Bigg(\sum_{\substack{N=(n_1,...,n_s) \\ n_1+...+n_s\leq m-1}}\sigma(\bar{q_N})g^N\Bigg)\in(\underline{g})^m.
\end{align*}
Again, since $\underline{g}$ is a regular sequence, we have that $(\underline{g})^m/(\underline{g})^{m+1}$ is a free $Q/(\underline{g})$-module with basis $\{\bar{g_1}^{i_1}...\bar{g_s}^{i_s}\}_{i_1+...+i_s=m}$, so 
\begin{align*}
    \pi_m\Bigg(q-\Bigg(\sum_{\substack{N=(n_1,...,n_s) \\ n_1+...+n_s\leq m-1}}\sigma(\bar{q_N})g^N\Bigg)\Bigg)=\sum_{\substack{N=(n_1,...,n_s) \\ n_1+...+n_s= m}}\bar{q_N}\bar{g}^N.
\end{align*}
Thus by induction, we have 
\begin{align*}
    q-\Bigg(\sum_{N\in\mathbb{N}^s}\sigma(\bar{q_N})g^N\Bigg)\in\bigcap_{n\geq 0} (\underline{g})^n=(0)
\end{align*} 
since $Q$ is complete (and thus separated) in $(\underline{g})$-adic topology.  Thus,
\begin{align*}
    q=\sum_{N}\sigma(\bar{q_N})g^N
\end{align*}
as desired.  
\newline
\indent To show uniqueness, we suppose that 
\begin{align*}
    \sum_{N}\sigma(\bar{q_N})g^N=q=\sum_{N}\sigma(\bar{r_N})g^N.
\end{align*}
Then we have that 
\begin{align*}
    \sum_{N}\sigma(\overline{q_N-r_N})g^N=0
\end{align*}
and thus
\begin{align*}
    0=\pi\Bigg(\sum_{N}\sigma(\overline{q_N-r_N})g^N\Bigg)=\overline{q_0-r_0},
\end{align*}
giving us the equality $q_0=r_0$. 
\newline
\indent Now suppose that $q_{N}=r_{N}$ for all $N=(n_1,...,n_s)$ with $\sum_{i=1}^s n_i\leq M-1$.  To show that this equality holds for $N$ such that $\sum_{i=1}^s n_i=M$, we note that
\begin{align*}
   \sum_{\substack{N=(n_1,...,n_s) \\ n_1+...+n_s\geq M}}\sigma(\overline{q_N-r_N})g^N=0 
\end{align*}
and thus
\begin{align*}
   0&=\pi_M\Bigg(\sum_{\substack{N=(n_1,...,n_s) \\ n_1+...+n_s\geq M}}\sigma(\overline{q_N-r_N})g^N\Bigg) \\
   &=\sum_{\substack{N=(n_1,...,n_s) \\ n_1+...+n_s= M}}\overline{q_N-r_N}\bar{g}^N
\end{align*}
But $\{\bar{g_1}^{n_1}...\bar{g_s}^{n_s}\}_{n_1+...+n_s=M}$ is a basis for $(\underline{g})^M/(\underline{g})^{M+1}$, so we get that $\overline{q_N}=\overline{r_N}$ for $N=(n_1,...,n_s)$ with $\sum_{i=1}^s n_i=M$, which completes induction.
\end{proof}

By writing elements of $Q$ in this way, Dyckerhoff and Murfet in \cite{dyckerhoff2013} give an explicit $k$-linear connection on $Q$, namely
\begin{align*}
    \nabla^0\colon Q\rightarrow Q\underset{k[\underline{g}]}{\otimes}\Omega^1_{k[\underline{g}]/k}
\end{align*}
given by
\begin{align*}
    \nabla^0(q)=\sum_{i=1}^s\sum_{N}N_i\sigma(\overline{q_N})g^{N-e_i}\otimes dg_i
\end{align*}
where $e_i\in\mathbb{Z}^s$ are the standard basis vectors and where for a tuple $N$ involving negative integers, $g^N$ is defined to be zero.  By connection we mean a $k$-linear map satisfying the Leibniz rule.  By means of this connection, one can define $\frac{\partial}{\partial g_i}$ to be the $k$-linear map given by the composition
\begin{align*}
    \frac{\partial}{\partial g_i}\colon Q\overset{\nabla^0}{\longrightarrow}Q\underset{k[\underline{g}]}{\otimes}\Omega^1_{k[\underline{g}]/k}\overset{(dg_i)^*}{\longrightarrow}Q.
\end{align*}

\begin{remk}
We note that in order for the partial $\frac{\partial}{\partial g_j}$ to be well-defined, it is important to fix a splitting $\sigma$.  Otherwise, one cannot write elements of $Q$ uniquely as sums of monomials in $\underline{g}$ with coefficients in $Q$ and different representations of an element would produce different partials.
\end{remk}

Using the map $\frac{\partial}{\partial g_j}$, one defines $\nabla$ to be the $k$-linear map
\begin{align*}
    \nabla\colon Q\underset{k[\underline{g}]}{\otimes}\Omega_{k[\underline{g}]/k}\rightarrow Q\underset{k[\underline{g}]}{\otimes}\Omega_{k[\underline{g}]/k}
\end{align*}
given by
\begin{align*}
    \nabla(q\otimes\omega)=\sum_{i=1}^s\frac{\partial}{\partial dg_i}(r)\otimes dg_i\wedge\omega+q\otimes d\omega.
\end{align*}
By our assumption that char\,$k=0$, we have that $\delta\nabla+\nabla\delta$ is invertible in nonzero degrees, so one can make the following definition; see \cite[Definition 8.8]{dyckerhoff2013}.

\begin{defn}
Let $H_{\nabla}$ be the $k$-linear map 
\begin{align*}
    H_{\nabla}&\colon Q\underset{k[\underline{g}]}{\otimes}\Omega_{k[\underline{g}]/k}\rightarrow Q\underset{k[\underline{g}]}{\otimes}\Omega_{k[\underline{g}]/k} \\
    H_{\nabla}&=(\delta\nabla+\nabla\delta)^{-1}\nabla.
\end{align*}
This map is called the \textit{de Rham contraction}.
\end{defn}

In \cite{dyckerhoff2013} Dyckerhoff and Murfet show that $H_{\nabla}$ is a homotopy on the Koszul complex.  Furthermore, they prove the following result.

\begin{thm}
\cite{dyckerhoff2013}  Under the assumptions of Section 2.3, the following is a special deformation retract datum
\begin{align*}
    \Big((Q/(\underline{g}),0)\underset{\sigma}{\overset{\pi}{\leftrightarrows}}(Q\underset{k[\underline{g}]}{\otimes}\Omega_{k[\underline{g}]/k},\delta), \thinspace H_{\nabla}\Big).
\end{align*}
\end{thm}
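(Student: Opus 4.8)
The plan is to verify, one piece at a time, the conditions packaged in the definition of a special deformation retract datum for the data $p=\pi$, $i=\sigma$, $H=H_{\nabla}$, where the small complex is $(Q/(\underline{g}),0)$ and the large complex is the Koszul complex $(K,\delta)$ with $K=Q\otimes_{k[\underline{g}]}\Omega_{k[\underline{g}]/k}$. Several of these are immediate. Since $\underline{g}$ is a regular sequence, $K$ is a free resolution of $Q/(\underline{g})$, so $H_0(K)=Q/(\underline{g})$ and $H_i(K)=0$ for $i>0$; thus both $\pi$ (which kills $\delta(K_1)=(\underline{g})$ and so is a chain map to the zero-differential complex) and its $k$-linear section $\sigma$ induce isomorphisms on homology and are quasi-isomorphisms, while $\pi\sigma=\mathrm{Id}_{Q/(\underline{g})}$ is exactly the splitting property. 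For the special relations, note first that $H_{\nabla}=(\delta\nabla+\nabla\delta)^{-1}\nabla$ raises exterior degree by one, so its image lies in positive degrees and $\pi H_{\nabla}=0$ is automatic. Next, $H_{\nabla}\sigma=0$ because $\sigma$ maps into $K_0=Q$ onto elements whose monomial expansion from Lemma \ref{lemmalipman} is a pure degree-zero term; such elements are annihilated by every partial $\partial/\partial g_i$, hence by $\nabla$, and therefore by $H_{\nabla}$.

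Write $N=\delta\nabla+\nabla\delta$. Using only $\delta^2=0$ one checks $\delta N=N\delta=\delta\nabla\delta$, so $\delta$ commutes with $N$ and with $N^{-1}$ wherever the latter is defined, namely on positive exterior degrees. On a class in degree $p\geq 1$ every occurrence of $N^{-1}$ in $\delta H_{\nabla}+H_{\nabla}\delta=\delta N^{-1}\nabla+N^{-1}\nabla\delta$ is applied in degree $\geq 1$, so I may freely commute to obtain $N^{-1}(\delta\nabla+\nabla\delta)=N^{-1}N=\mathrm{Id}$; since $\sigma\pi$ vanishes in positive degrees, this is the desired homotopy identity there. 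Similarly $H_{\nabla}^2=N^{-1}\nabla N^{-1}\nabla=N^{-2}\nabla^2$, which vanishes once one knows $\nabla^2=0$ (so that $\nabla$ commutes with $N$).

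The one genuinely special degree is $p=0$, where $\delta$ out of $K_0=Q$ is zero, so the homotopy identity collapses to $\delta H_{\nabla}|_{K_0}=\mathrm{Id}_Q-\sigma\pi$. Here $\delta\nabla$ acts on $Q$ as the Euler operator $E(q)=\sum_i g_i\,\partial q/\partial g_i$, and reading off the monomial expansion of Lemma \ref{lemmalipman} gives $E\big(\sum_N\sigma(\overline{q_N})g^N\big)=\sum_N|N|\,\sigma(\overline{q_N})g^N$; hence $\ker E=\sigma(Q/(\underline{g}))$. For $q\in Q$ set $y=H_{\nabla}q\in K_1$, so $Ny=\nabla q$; applying $\delta$ and commuting $\delta$ past $N$ gives $E(\delta y-q)=0$, whence $\delta y-q\in\sigma(Q/(\underline{g}))$. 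Applying $\pi$ and using $\pi\delta=0$ on $K_1$ then pins the correction down to $\delta y-q=-\sigma\pi(q)$, yielding $\delta H_{\nabla}|_{K_0}=\mathrm{Id}_Q-\sigma\pi$ and completing the homotopy identity in all degrees (up to the overall sign convention built into the definition, where $ip-\mathrm{Id}_G$ versus $\mathrm{Id}_G-ip$ merely replaces $H_{\nabla}$ by $-H_{\nabla}$).

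The hard part will be the flatness identity $\nabla^2=0$, on which both $H_{\nabla}^2=0$ and the commutation $\nabla N=N\nabla$ rest. I would prove it by reducing to commutativity of the partials, $\partial^2/\partial g_i\partial g_j=\partial^2/\partial g_j\partial g_i$: writing $q=\sum_N\sigma(\overline{q_N})g^N$ as in Lemma \ref{lemmalipman}, each $\partial/\partial g_i$ acts as the formal partial derivative $\sum_N N_i\,\sigma(\overline{q_N})g^{N-e_i}$ on this expansion, the coefficients $\sigma(\overline{q_N})$ being pure degree-zero terms that the operators treat as constants, so the two mixed second partials both carry the symmetric coefficient $N_iN_j\,\overline{q_N}$ and agree. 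Feeding this, together with $\delta^2=0$ and the de Rham relation $d^2=0$ on $\Omega_{k[\underline{g}]/k}$, into the Leibniz-rule definition of $\nabla$ gives $\nabla^2=0$. The remaining work is bookkeeping: tracking exterior degrees so that $N^{-1}$ is only ever invoked where the hypothesis $\mathrm{char}\,k=0$ guarantees invertibility, and fixing the global sign convention.
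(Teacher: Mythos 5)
This theorem is one the paper does not prove at all: it is quoted from Dyckerhoff--Murfet \cite{dyckerhoff2013}, so there is no in-paper argument to compare against. Your proof is essentially correct and supplies the missing verification. The structure is sound: the quasi-isomorphism and splitting conditions are immediate from $K$ being a free resolution of $Q/(\underline{g})$; $\pi H_{\nabla}=0$ by degree reasons; $H_{\nabla}\sigma=0$ because the canonical expansion of $\sigma(\bar q)$ is its own degree-zero term (this uses the uniqueness clause of Lemma \ref{lemmalipman}); the positive-degree homotopy identity follows from $\delta N=N\delta=\delta\nabla\delta$; $H_{\nabla}^2=0$ reduces to $\nabla^2=0$, which in turn reduces to symmetry of the mixed partials (the paper's Lemma \ref{lemmamixed}) plus antisymmetry of the wedge; and your degree-zero computation via $\ker E=\sigma(Q/(\underline{g}))$ and $\pi\delta=0$ is correct. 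Two remarks. First, much of your operator bookkeeping can be short-circuited by the paper's Lemma \ref{lemmaconst}, which shows $N=\delta\nabla+\nabla\delta$ acts on the $(|N|,k)$-homogeneous pieces as multiplication by $|N|+k$; with that in hand both the invertibility in positive degrees and the homotopy identity in all degrees follow from a one-line direct computation of $\delta H_{\nabla}+H_{\nabla}\delta$ (your degree-zero argument then becomes the observation that $\sum_j\frac{\partial q}{\partial g_j}g_j$ reproduces every term of $q$ except the $N=0$ one). Second, your direct computation yields $\delta H_{\nabla}+H_{\nabla}\delta=\mathrm{Id}-\sigma\pi$, whereas the paper's Definition 2.6 normalizes the homotopy identity as $\partial_G H+H\partial_G=ip-\mathrm{Id}_G$; strictly speaking the homotopy in that convention is $-H_{\nabla}$. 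You flag this, and it is harmless (the same discrepancy is silently present in the paper's proof of Lemma \ref{lemmadr}), but it should be stated as a fixed choice rather than left as ``up to sign.''
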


We modify this datum in the next section and apply the perturbation lemma to provide explicit formulas for generators of Koszul homology in the context of $\underline{g}$-weak complete intersections.

\section{Generators of Koszul Homology}
\paragraph
\indent  Let $k$ be a field of characteristic zero and $Q$ a Noetherian $k$-algebra.  Let $\underline{g}$ be a regular sequence in $Q$ such that $Q/(\underline{g})$ is a finite dimensional $k$-vector space and let $M$ be a finitely generated $\underline{g}$-weak complete intersection $Q$-module.  In this section, we study the homology of the Koszul complex on $\underline{g}$ with coefficients in $M$, which we denote as $H(\underline{g};M)$.  In particular, we provide explicit formulas for generators of each $H_i(\underline{g};M)$.  Of course, this setting includes the case where $M=Q/I$ and $I$ is a $\underline{g}$-weak complete intersection ideal. 
\newline
\indent  Now we fix some notation to be used throughout the section.   Let $(F,\partial_F)$ be a free resolution of  $M$ over $Q$ such that Im$\,\partial_F\subseteq (\underline{g})F$ as in Definition \ref{defwci}.  Let $\pi:Q\rightarrow Q/(\underline{g})$ be the usual quotient map and fix a $k$-linear splitting $\sigma\colon Q/(\underline{g})\rightarrow Q$. 
\newline
\indent In order to obtain explicit generators for the Koszul homology $H(\underline{g};M)$, we consider the isomorphisms
\begin{align}
H(\underline{g};M)\cong H(M\underset{Q}{\otimes}K(\underline{g};Q))\cong\Tor^Q(M,Q/(\underline{g}))\cong H(F\underset{Q}{\otimes}Q/(\underline{g}))\cong F\underset{Q}{\otimes}Q/(\underline{g})
\end{align} 
of $Q/(\underline{g})$-modules.  Thus, the Koszul homology we are interested in is isomorphic to the homology of the double complex $F\otimes K(\underline{g};Q)$.  We begin by giving a modification, involving a related double complex, of the special deformation retract datum of Theorem 2.12.  We apply the Perturbation Lemma to this datum to yield the desired formulas in Theorem \ref{mainthm}.  We utilize Lemmas \ref{lemmamixed}, \ref{lemmaproduct}, and \ref{lemmaconst} in the proof of this theorem and the following corollaries.

\begin{lemma}\label{lemmadr}
Let $Q$ be complete in the $(\underline{g})$-adic topology.  The following is a special deformation retract datum
\begin{align*}
    \Big((F\underset{Q}{\otimes} Q/(\underline{g}),0)\underset{1\otimes\sigma}{\overset{1\otimes\pi}{\leftrightarrows}}(\text{Tot}(F\underset{Q}{\otimes}Q\underset{k[\underline{g}]}{\otimes}\Omega_{k[\underline{g}]/k}),(0,\delta)), \thinspace 1\otimes H_{\nabla}\Big).
\end{align*}
\end{lemma}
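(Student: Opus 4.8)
The plan is to realize the asserted datum as the image of the special deformation retract datum of Theorem 2.12 under the functor $F\otimes_Q(-)$, where $F$ is regarded as a complex carrying the \emph{zero} differential. Applying $F\otimes_Q(-)$ sends the structure maps $\pi$, $\sigma$, and $H_\nabla$ to $1\otimes\pi$, $1\otimes\sigma$, and $1\otimes H_\nabla$; it sends the bottom complex $Q/(\underline{g})$ to $F\otimes_Q Q/(\underline{g})$, which carries the zero differential since both tensor factors do; and it sends the top complex $Q\otimes_{k[\underline{g}]}\Omega_{k[\underline{g}]/k}$ to the total complex of $F\otimes_Q Q\otimes_{k[\underline{g}]}\Omega_{k[\underline{g}]/k}$ in which the $F$-direction differential is switched off, leaving precisely the differential $(0,\delta)$. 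Thus the candidate datum is exactly this image, and it remains to check that each defining property of a special deformation retract datum is inherited from Theorem 2.12.

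First I would dispatch the multiplicative identities. Because $F\otimes_Q(-)$ is functorial and respects composition, and because $\pi$ and $\sigma$ have degree zero, the relations $\pi\sigma=\mathrm{Id}$, $H_\nabla\sigma=0$, $\pi H_\nabla=0$, and $H_\nabla^2=0$ of Theorem 2.12 transport directly to
\begin{align*}
(1\otimes\pi)(1\otimes\sigma)=\mathrm{Id},\qquad (1\otimes H_\nabla)(1\otimes\sigma)=0,\qquad (1\otimes\pi)(1\otimes H_\nabla)=0,\qquad (1\otimes H_\nabla)^2=0,
\end{align*}
which are exactly the conditions $pi=\mathrm{Id}$, $Hi=0$, $pH=0$, and $H^2=0$ required of a special deformation retract datum.

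Next I would verify the homotopy identity, which is the only place genuine care is needed. Evaluating $(0,\delta)(1\otimes H_\nabla)+(1\otimes H_\nabla)(0,\delta)$ on a homogeneous element $f\otimes\omega$, one must track the Koszul sign $(-1)^{|f|}$ produced by moving the odd-degree maps $\delta$ and $H_\nabla$ past $f$ (of $F$-degree $|f|$); once the sign conventions on the differential $(0,\delta)$ and on the tensored homotopy $1\otimes H_\nabla$ are fixed compatibly, the two Koszul signs on each term multiply to $+1$ and the outcome is $1\otimes(\delta H_\nabla+H_\nabla\delta)$. Invoking the homotopy identity $\delta H_\nabla+H_\nabla\delta=\sigma\pi-\mathrm{Id}$ from Theorem 2.12 then yields $(1\otimes\sigma)(1\otimes\pi)-\mathrm{Id}$, as required. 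For the quasi-isomorphism condition I would observe that $\pi$ and $\sigma$ are mutually inverse up to the homotopy $H_\nabla$, hence are homotopy equivalences; since homotopy equivalences are preserved under tensoring with any complex, $1\otimes\pi$ and $1\otimes\sigma$ are again homotopy equivalences, and in particular quasi-isomorphisms. (Alternatively, each $F_i$ is free and therefore flat, so $F\otimes_Q(-)$ preserves quasi-isomorphisms outright.)

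The sole delicate point, which I would isolate explicitly, is the sign bookkeeping in the homotopy identity: the Koszul signs in the total-complex differential $(0,\delta)$ and in the tensored contraction $1\otimes H_\nabla$ must be chosen compatibly so that the cross-terms combine correctly and the surviving term is exactly $1\otimes(\delta H_\nabla+H_\nabla\delta)$. This same choice of signs is what will subsequently make $\partial_F\otimes 1$ a legitimate square-zero perturbation of $(0,\delta)$, so fixing it correctly here is precisely what enables the application of the Perturbation Lemma in Theorem \ref{mainthm}. Beyond this, every assertion is a formal consequence of the functoriality of $F\otimes_Q(-)$ together with the properties already recorded in Theorem 2.12.
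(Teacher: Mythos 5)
Your proposal is correct and follows essentially the same route as the paper: both arguments transport each defining property of the special deformation retract datum of Theorem 2.12 through tensoring with the free complex $F$ (with its differential switched off), using freeness to preserve the quasi-isomorphisms and checking the homotopy identity componentwise on simple tensors. The only presentational difference is your explicit attention to Koszul signs, which the paper sidesteps by working directly on tuples $(a_i\otimes b_0,\dots,a_0\otimes b_i)$ where, since the $F$-direction differential is zero, no cross-terms arise; note also that since $\sigma$ and $H_\nabla$ are only $k$-linear, the ``functor $F\otimes_Q(-)$'' must be interpreted via the fixed bases of the $F_\ell$ as in the remark preceding the paper's proof.
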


Before the proof, we make the following remark about the maps appearing in the lemma.

\begin{remk}
Here we denote by $K$, the Koszul complex 
\begin{align*}
K=Q\underset{k[\underline{g}]}{\otimes}\Omega_{k[\underline{g}]/k}. 
\end{align*}
We note that $\sigma$ and $H_{\nabla}$ are only $k$-linear maps, so to define the maps $1\otimes\sigma$ and $1\otimes H_{\nabla}$, we first fix a basis $h_1^{\ell},...,h_{b_{\ell}}^{\ell}$ for each module $F_{\ell}$ in the resolution $F$, giving the isomorphisms $F_{\ell}\cong Q^{b_{\ell}}$.  Now we have the isomorphisms 
\begin{align*}
F_{\ell}\underset{Q}{\otimes}K_0\cong Q^{b_{\ell}}\underset{Q}{\otimes}K_0\cong K_0^{b_{\ell}}
\end{align*}
and 
\begin{align*}
F_{\ell}\underset{Q}{\otimes}Q/(\underline{g})\cong Q^{b_{\ell}}\underset{Q}{\otimes}Q/(\underline{g})\cong (Q/(\underline{g}))^{b_{\ell}}.
\end{align*}
We define $1\otimes\sigma:F_{\ell}\underset{Q}{\otimes}Q/(\underline{g})\rightarrow F_{\ell}\underset{Q}{\otimes}K_0$ by applying $\sigma$ to each of the $b_{\ell}$ summands.  We extend this to a map $F\underset{Q}{\otimes} Q/(\underline{g})\rightarrow\text{Tot}(F\underset{Q}{\otimes}Q\underset{k[\underline{g}]}{\otimes}\Omega_{k[\underline{g}]/k})$ and, abusing notation slightly, we again call this map $1\otimes\sigma$.  Similarly, given our fixed basis of $F_{\ell}$ above, we have isomorphisms
\begin{align*}
F_{\ell}\underset{Q}{\otimes}K_i\cong Q^{b_{\ell}}\underset{Q}{\otimes}K_i\cong K_i^{b_{\ell}}
\end{align*} 
and 
\begin{align*}
F_{\ell}\underset{Q}{\otimes}K_{i+1}\cong Q^{b_{\ell}}\underset{Q}{\otimes}K_{i+1}\cong K_{i+1}^{b_{\ell}},
\end{align*}
and so we define $1\otimes H_{\nabla}$ by applying $H_{\nabla}$ to each of the $b_{\ell}$ summands.  Throughout the remainder of this section, we use the notation $1\otimes\sigma$ and $1\otimes H_{\nabla}$ with the understanding that the maps are defined with respect to the fixed bases above.   Also, we note that for $i>0$, $1\otimes\pi$ sends elements of $F\underset{Q}{\otimes}K_i$ to zero. 
\newline
\indent  We will see that, in order to give a basis for the Koszul homology, it is enough to find a $k$-linear map which agrees with the $Q$-linear isomorphism (1), and apply this map to our fixed bases above.  We use the $k$-linear maps $1\otimes\sigma$ and $1\otimes H_{\nabla}$ to produce such a map.   
\end{remk}

Now we give a proof of Lemma \ref{lemmadr}.

\begin{proof}
We note that tensoring with the complex of free modules $F$ preserves the quasi-isomorphisms $\pi$ and $\sigma$.  It is also clear that $(1\otimes H_{\nabla})(1\otimes\sigma)=0$, $(1\otimes\pi )(1\otimes H_{\nabla})=0$, and $(1\otimes H_{\nabla})^2=0$ since the deformation retract datum from Theorem 2.12 is special.  So we need only check that $1\otimes H_{\nabla}$ is a homotopy between $(1\otimes\sigma)\circ (1\otimes\pi )$ and $\text{Id}_{\text{Tot}(F\underset{Q}{\otimes}Q\underset{k[\underline{g}]}{\otimes}\Omega_{k[\underline{g}]/k})}$.  It suffices to check this equality on simple tensors, so for $a_j\in F_j$ and $b_j\in K_j$ we calculate 
\begin{align*}
      (1\otimes\sigma)\circ (1\otimes\pi )(a_i\otimes b_0,...,a_0\otimes b_i)&=(1\otimes\sigma)(a_i\otimes\overline{b_0}) \\
      &=(a_i\otimes\sigma(\overline{b_0}),0,...,0).
\end{align*}
Thus, we have 
\begin{align*}
    ((1\otimes\sigma)\circ (1\otimes\pi )&-\text{Id})(a_i\otimes b_0,...,a_0\otimes b_i)=(a_i\otimes\sigma(\overline{b_0})-b_0,-a_{i-1}\otimes b_1,...,-a_0\otimes b_i) \\
    &=(a_i\otimes (\sigma\pi-\text{Id})(b_0),...,a_0\otimes (\sigma\pi-\text{Id})(b_i)) \\
    &=(a_i\otimes (\delta H_{\nabla}+H_{\nabla}\delta)(b_0),...,a_0\otimes (\delta H_{\nabla}+H_{\nabla}\delta)(b_i)) \\
    &=(\delta (1\otimes H_{\nabla})+(1\otimes H_{\nabla})\delta)(a_i\otimes b_0,...,a_0\otimes b_i)
\end{align*}
which completes the proof.
\end{proof}

In order to formulate explicit generators of Koszul homology, we need the following lemmas.

\begin{lemma}\label{lemmamixed}
For every element $q$ of $Q$,
\begin{align*}
\frac{\partial}{\partial g_j}\big(\frac{\partial}{\partial g_i}(q)\big)=\frac{\partial}{\partial g_i}\big(\frac{\partial}{\partial g_j}(q)\big).
\end{align*}
\end{lemma}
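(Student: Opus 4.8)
The plan is to compute both iterated partials directly from the canonical expansion of Lemma \ref{lemmalipman} and to observe that each reduces to one and the same manifestly symmetric sum. We may assume $i\neq j$, since for $i=j$ there is nothing to prove. Writing $q=\sum_{N\in\mathbb{N}^s}\sigma(\overline{q_N})g^N$ as in Lemma \ref{lemmalipman} and unwinding the definition of $\frac{\partial}{\partial g_i}$ as the composite $(dg_i)^*\circ\nabla^0$, which extracts the coefficient of $dg_i$, gives
\[
\frac{\partial}{\partial g_i}(q)=\sum_{N\in\mathbb{N}^s}N_i\,\sigma(\overline{q_N})\,g^{N-e_i}.
\]

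First I would argue that this expression is already the canonical expansion of $\frac{\partial}{\partial g_i}(q)$ in the sense of Lemma \ref{lemmalipman}, so that I am entitled to apply $\frac{\partial}{\partial g_j}$ to it termwise. Since $\operatorname{char}k=0$, the integer $N_i$ lies in $k$, and the $k$-linearity of $\sigma$ yields $N_i\,\sigma(\overline{q_N})=\sigma(N_i\overline{q_N})$. Reindexing by $M=N-e_i$ (the terms with $N_i=0$ vanish, so $M$ ranges over all of $\mathbb{N}^s$) produces $\frac{\partial}{\partial g_i}(q)=\sum_{M\in\mathbb{N}^s}\sigma(\overline{r_M})\,g^M$ with $\overline{r_M}=(M_i+1)\overline{q_{M+e_i}}\in Q/(\underline{g})$. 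By the uniqueness half of Lemma \ref{lemmalipman}, these $\overline{r_M}$ are precisely the canonical coefficients of $\frac{\partial}{\partial g_i}(q)$, so $\frac{\partial}{\partial g_j}$ acts on it by the same coefficient-extraction formula as above.

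Next I would apply $\frac{\partial}{\partial g_j}$ and reindex back. Substituting $\overline{r_M}$ and then setting $N=M+e_i$ (so $M_i+1=N_i$, $M_j=N_j$, and $g^{M-e_j}=g^{N-e_i-e_j}$), and again extending the sum over all of $\mathbb{N}^s$ because the vanishing terms cost nothing, gives
\[
\frac{\partial}{\partial g_j}\Big(\frac{\partial}{\partial g_i}(q)\Big)=\sum_{N\in\mathbb{N}^s}N_iN_j\,\sigma(\overline{q_N})\,g^{N-e_i-e_j}.
\]
Because $N_iN_j=N_jN_i$ and $g^{N-e_i-e_j}=g^{N-e_j-e_i}$, the right-hand side is symmetric in $i$ and $j$; running the identical computation with the roles of $i$ and $j$ interchanged returns the same sum, which establishes the claim.

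The main obstacle I anticipate is the bookkeeping in the middle step: verifying that $\frac{\partial}{\partial g_i}(q)$ is presented in the normal form demanded by Lemma \ref{lemmalipman} \emph{before} the second partial is taken. This is exactly where the characteristic-zero hypothesis enters, since without it the scalar $N_i$ could not be absorbed inside $\sigma$, and the coefficient $N_i\,\sigma(\overline{q_N})$ would not visibly be $\sigma$ of a residue class; consequently the well-definedness of the outer partial, which (as the remark following the definition of $\frac{\partial}{\partial g_i}$ stresses) hinges on the fixed splitting $\sigma$, would be in doubt. Once this normal form is secured, the remaining steps are purely formal reindexing.
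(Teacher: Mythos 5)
Your proof is correct and is essentially the paper's argument written out in full: the paper's proof is the one-line observation that the claim ``follows directly from the definition of $\frac{\partial}{\partial g_j}$ and the fact that $\sigma$ is $k$-linear,'' and your computation supplies exactly the missing details, namely that $k$-linearity of $\sigma$ puts $\frac{\partial}{\partial g_i}(q)$ back into the canonical form of Lemma \ref{lemmalipman} so the outer partial can be evaluated termwise, yielding the symmetric expression $\sum_{N}N_iN_j\,\sigma(\overline{q_N})\,g^{N-e_i-e_j}$. One tiny quibble with your closing commentary: absorbing $N_i$ into $\sigma$ uses only $k$-linearity via the structure map $\mathbb{Z}\to k$ and would work in any characteristic (the characteristic-zero hypothesis is really needed elsewhere, e.g.\ for the invertibility of $\delta\nabla+\nabla\delta$), but this does not affect the validity of your proof.
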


\begin{proof}
This follows directly from the definition of $\frac{\partial}{\partial g_j}$ and the fact that $\sigma$ is $k$-linear.
\end{proof}

The next lemma is a version of the product rule for $\frac{\partial}{\partial g_j}$. 

\begin{lemma}\label{lemmaproduct}
The map $\frac{\partial}{\partial g_j}$ satisfies the rule
\begin{align*}
    \frac{\partial}{\partial g_j}(qr)=\frac{\partial}{\partial g_j}(q)r+\frac{\partial}{\partial g_j}(r)q+\sum_{M,N}\Big(\frac{\partial}{\partial g_j}\big(\sigma(\bar{q_N})\sigma(\bar{r_M})\big)\Big)g^{M+N}
\end{align*}
where $g^{M+N}=g_1^{M_1+N_1}...g_s^{M_s+N_s}$ and $q=\sum_{N}\sigma(\bar{q_N})g^N$ and $r=\sum_{M}\sigma(\bar{r_M})g^M$ are elements of $Q$.
\end{lemma}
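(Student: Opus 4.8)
The plan is to reduce the product rule to a single ``monomial'' computation and then reassemble the pieces; the whole point is that the naive expansion of $qr$ is \emph{not} its canonical representation in the sense of Lemma \ref{lemmalipman}, and the correction term is precisely the measure of this failure. First I would record the explicit formula for the partial that falls out of the definition of $\nabla^0$ followed by contraction with $(dg_j)^*$: for $q=\sum_N\sigma(\bar{q_N})g^N$ one has
\[
\frac{\partial}{\partial g_j}(q)=\sum_N N_j\,\sigma(\bar{q_N})\,g^{N-e_j}.
\]

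The key step I would isolate is a product rule against a pure power: for an \emph{arbitrary} $a\in Q$ and $P\in\mathbb{N}^s$,
\[
\frac{\partial}{\partial g_j}(a\,g^P)=\frac{\partial}{\partial g_j}(a)\,g^P+P_j\,a\,g^{P-e_j}.
\]
To prove this I would write $a=\sum_L\sigma(\bar{a_L})g^L$ and observe that $a\,g^P=\sum_L\sigma(\bar{a_L})g^{L+P}$ already \emph{is} the canonical representation of $a\,g^P$: the coefficients still lie in the image of $\sigma$ and the exponents $L+P$ are distinct, so uniqueness in Lemma \ref{lemmalipman} identifies $\overline{(a\,g^P)_J}=\bar{a_{J-P}}$. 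Substituting this into the explicit formula above and splitting the resulting factor $(L_j+P_j)$ produces the two displayed terms. This is the step that carries the real content.

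Next I would apply this to $qr$. Expanding, $qr=\sum_{N,M}\sigma(\bar{q_N})\sigma(\bar{r_M})\,g^{N+M}$, where each coefficient $\sigma(\bar{q_N})\sigma(\bar{r_M})$ is a genuine element of $Q$ but in general \emph{not} of the form $\sigma(-)$ — exactly the obstruction that forces a correction term. Treating each $\sigma(\bar{q_N})\sigma(\bar{r_M})$ as the ``$a$'' and $N+M$ as the ``$P$'' in the monomial product rule gives
\[
\frac{\partial}{\partial g_j}(qr)=\sum_{N,M}\frac{\partial}{\partial g_j}\big(\sigma(\bar{q_N})\sigma(\bar{r_M})\big)g^{N+M}+\sum_{N,M}(N_j+M_j)\,\sigma(\bar{q_N})\sigma(\bar{r_M})\,g^{N+M-e_j}.
\]
The first sum is precisely the claimed correction term $\sum_{M,N}\big(\frac{\partial}{\partial g_j}(\sigma(\bar{q_N})\sigma(\bar{r_M}))\big)g^{M+N}$. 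Splitting the second sum according to $N_j$ versus $M_j$ and regrouping the factors recovers $\big(\sum_N N_j\sigma(\bar{q_N})g^{N-e_j}\big)r=\frac{\partial}{\partial g_j}(q)\,r$ and $q\big(\sum_M M_j\sigma(\bar{r_M})g^{M-e_j}\big)=q\,\frac{\partial}{\partial g_j}(r)$, which together yield the statement.

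The main obstacle I anticipate is foundational rather than conceptual: justifying the term-by-term application of the only-$k$-linear map $\frac{\partial}{\partial g_j}$ to the infinite sums. I would handle this by comparing coefficients of a fixed monomial $g^J$ on both sides, noting that only finitely many pairs $(N,M)$ contribute to any given $J$, or equivalently by invoking continuity of the construction in the $(\underline{g})$-adic topology in which $Q$ is complete. Lemma \ref{lemmamixed} is not needed here; apart from the convergence bookkeeping, everything is a careful rearrangement, and the genuine insight is simply that $a\,g^P$ retains a canonical form while $\sigma(\bar{q_N})\sigma(\bar{r_M})$ does not.
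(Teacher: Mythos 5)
Your proposal is correct and follows essentially the same route as the paper's proof: both expand $qr=\sum_{M,N}\sigma(\bar{q_N})\sigma(\bar{r_M})g^{M+N}$, re-expand each coefficient product $\sigma(\bar{q_N})\sigma(\bar{r_M})$ in its canonical form, and split the resulting exponent factor into the $(M_j+N_j)$ part (which reassembles into $\frac{\partial}{\partial g_j}(q)r+\frac{\partial}{\partial g_j}(r)q$) and the remaining part (which is exactly the correction term). Your ``monomial product rule'' intermediate step and the explicit remark about term-by-term differentiation of the infinite sums are just a slightly more modular packaging of the paper's triple-sum manipulation, not a different argument.
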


\begin{proof}
We begin by noting that $qr=\sum_{M,N}\sigma(\bar{q_N})\sigma(\bar{r_M})g^{M+N}$ and writing $\sigma(\bar{q_N})\sigma(\bar{r_M})=\sum_{P}\sigma(\bar{s_P})g^P$.  Now we compute
\begin{align*}
    \frac{\partial}{\partial g_j}(qr)&=\frac{\partial}{\partial g_j}\Bigg(\sum_{M,N,P}\sigma(\bar{s_P})g^{M+N+P}\Bigg)=\sum_{M,N,P}(M_j+N_j+P_j)\sigma(\bar{s_P})g^{M+N+P-e_j} \\
    &=\sum_{M,N,P}(M_j+N_j)\sigma(\bar{s_P})g^{M+N+P-e_j}+\sum_{M,N,P}P_j\sigma(\bar{s_P})g^{M+N+P-e_j} \\
    &=\sum_{M,N}(M_j+N_j)\Big(\sum_{P}\sigma(\bar{s_P})g^P\Big) g^{M+N-e_j}+\sum_{M,N}\Big(\sum_{P}P_j\sigma(\bar{s_P})g^{P-e_j}\Big)g^{M+N} \\
    &=\sum_{M,N}(M_j+N_j)\sigma(\bar{q_N})\sigma(\bar{r_M}) g^{M+N-e_j}+\sum_{M,N}\Bigg(\frac{\partial}{\partial g_j}\big(\sigma(\bar{q_N})\sigma(\bar{r_M})\big)\Bigg) g^{M+N} \\
\end{align*}
and we see that
\begin{align*}
    \sum_{M,N}&(M_j+N_j)\sigma(\bar{q_N})\sigma(\bar{r_M}) g^{M+N-e_j} \\
    &=\sum_{M,N}M_j\sigma(\bar{q_N})\sigma(\bar{r_M}) g^{M+N-e_j}+\sum_{M,N}N_j\sigma(\bar{q_N})\sigma(\bar{r_M}) g^{M+N-e_j} \\
    &=\Bigg(\sum_{M}M_j\sigma(\bar{r_M})g^{M-e_j}\Bigg)\Bigg(\sum_{N}\sigma(\bar{q_N})g^N\Bigg)+\Bigg(\sum_{N}N_j\sigma(\bar{q_N})g^{N-e_j}\Bigg)\Bigg(\sum_{M}\sigma(\bar{r_M})g^M\Bigg) \\
    &=\frac{\partial}{\partial g_j}(q)r+\frac{\partial}{\partial g_j}(r)q
\end{align*}
which completes the proof.
\end{proof}

\begin{remk}\label{remproduct}
We see from the lemma that $\frac{\partial}{\partial g_j}$ satisfies the usual product rule, namely
\begin{align*}
\frac{\partial}{\partial g_j}(qr)=\frac{\partial}{\partial g_j}(q)r+q\frac{\partial}{\partial g_j}(r)
\end{align*}
whenever either $q$ or $r$ can be written as $\sum_N\sigma(\bar{q_N})g^N$ where the coefficients $\sigma(\bar{q_N})$ are elements of the field $k$.  Indeed, in this case $\frac{\partial}{\partial g_j}(\sigma(\bar{q_N})\sigma(\bar{r_M}))=0$.  In particular, we have that 
\begin{align*}
\frac{\partial}{\partial g_j}(qg_k)=
\begin{cases}
\frac{\partial}{\partial g_j}(q)g_k+q & j=k \\
\frac{\partial}{\partial g_j}(q)g_k & j\neq k 
\end{cases}.
\end{align*}
However, examples which do not satisfy the usual product rule are plentiful.  For example, consider the regular sequence $g_1=x^2$, $g_2=y^3$, $g_3=z^5$ in $k[x,y,z]$.  We have that
\begin{align*}
\frac{\partial}{\partial g_1}(xy^3\cdot xz^5)=\frac{\partial}{\partial g_1}(x^2y^3z^5)=y^3z^5, 
\end{align*} 
but
\begin{align*}
\frac{\partial}{\partial g_1}(xy^3)xz^5+\frac{\partial}{\partial g_1}(xz^5)xy^3=0.
\end{align*}
Note however that 
\begin{align*}
\frac{\partial}{\partial g_1}(x\cdot x)y^3z^5=y^3z^5
\end{align*}
which illustrates the lemma.
\end{remk}

The next lemma says that the map $(\delta\nabla+\nabla\delta)^{-1}$ applied to elements of $Q\underset{k[\underline{g}]}{\otimes}\bigwedge^k\Omega^1_{k[\underline{g}]/k}$ is given in each degree by multiplication by the inverse of the sum of the internal and external degrees of that homogeneous piece.

\begin{lemma}\label{lemmaconst}
Let $\delta$ be the Koszul differential and $\nabla$ be the $k$-linear map defined in 2.3.  Then 
\begin{align*}
    (\delta\nabla+\nabla\delta)^{-1}(\sum_{N}\sigma(\bar{q_N})g^N\otimes dg_{i_1}...dg_{i_k})=\sum_{N}\Bigg(\frac{1}{|N|+k}\sigma(\bar{q_N})g^N\Bigg)\otimes dg_{i_1}...dg_{i_k}
\end{align*}
where $N=(n_1,...,n_s)$ and $|N|=n_1+...+n_s$.
\end{lemma}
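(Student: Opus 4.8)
The plan is to reduce the statement to the single clean fact that the operator $\delta\nabla+\nabla\delta$ acts on each homogeneous piece $\sigma(\bar{q_N})g^N\otimes dg_{i_1}\wedge\cdots\wedge dg_{i_k}$ as multiplication by the scalar $|N|+k$. Granting this, the lemma is immediate: since $\mathrm{char}\,k=0$ the integer $|N|+k$ is invertible in every nonzero degree, so $(\delta\nabla+\nabla\delta)^{-1}$ is multiplication by $1/(|N|+k)$, which is exactly the asserted formula. By $k$-linearity it is enough to treat a single term $x=\sigma(\bar{q_N})g^N\otimes\omega$ with $\omega=dg_{i_1}\wedge\cdots\wedge dg_{i_k}$. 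The one preliminary observation I would record is that $\frac{\partial}{\partial g_i}\big(\sigma(\bar{q_N})g^N\big)=N_i\,\sigma(\bar{q_N})g^{N-e_i}$: the expression $\sigma(\bar{q_N})g^N$ is already in the canonical form of Lemma \ref{lemmalipman} (its only nonzero residue class sits in slot $N$), so the defining formula for $\nabla^0$ gives the partial directly. I would also fix the conventions that $\delta$ is $Q$-linear and acts on forms by the graded Leibniz rule with $\delta(dg_i)=g_i$, and that $\nabla(q\otimes\omega)=\sum_i\frac{\partial}{\partial g_i}(q)\,dg_i\wedge\omega$, the de Rham term $q\otimes d\omega$ vanishing on the constant-coefficient form $\omega$.

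Next I would compute the two composites on $x$ separately. Applying $\nabla$ first gives $\nabla(x)=\sum_{i}N_i\,\sigma(\bar{q_N})g^{N-e_i}\,dg_i\wedge\omega$, and then $\delta$, via the graded Leibniz identity $\delta(dg_i\wedge\omega)=g_i\,\omega-dg_i\wedge\delta(\omega)$, splits $\delta\nabla(x)$ into a diagonal part $\sum_i N_i\,\sigma(\bar{q_N})g^N\,\omega=|N|\cdot x$ and a remainder $-\sum_{i,j}N_i(-1)^{j-1}\sigma(\bar{q_N})g^{N+e_{i_j}-e_i}\,dg_i\wedge\omega_{\hat\jmath}$, where $\omega_{\hat\jmath}$ denotes $\omega$ with $dg_{i_j}$ deleted. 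Dually, applying $\delta$ first gives $\delta(x)=\sum_j(-1)^{j-1}\sigma(\bar{q_N})g^{N+e_{i_j}}\,\omega_{\hat\jmath}$, and then applying $\nabla$, separating in each partial the contribution of the index $i=i_j$ from the rest, yields $\nabla\delta(x)=\sum_{i,j}N_i(-1)^{j-1}\sigma(\bar{q_N})g^{N+e_{i_j}-e_i}\,dg_i\wedge\omega_{\hat\jmath}+\sum_j(-1)^{j-1}\sigma(\bar{q_N})g^N\,dg_{i_j}\wedge\omega_{\hat\jmath}$.

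Adding the two composites, the first sum in $\nabla\delta(x)$ is term-by-term the negative of the remainder in $\delta\nabla(x)$, so these cancel. In the surviving sum I would use $dg_{i_j}\wedge\omega_{\hat\jmath}=(-1)^{j-1}\omega$ to collapse each summand to $\sigma(\bar{q_N})g^N\,\omega=x$, producing $k\cdot x$. Combined with the $|N|\cdot x$ from the diagonal part, this gives $(\delta\nabla+\nabla\delta)(x)=(|N|+k)\,x$ and hence the lemma.

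I expect the only real obstacle to be the sign and index bookkeeping: one must apply the graded Leibniz rule $\delta(a\wedge b)=\delta(a)\wedge b+(-1)^{|a|}a\wedge\delta(b)$ consistently and track the factor $(-1)^{j-1}$ attached to each $\omega_{\hat\jmath}$ in both composites, so that the two off-diagonal families match up and cancel while the residual diagonal terms collapse cleanly to $k\cdot x$. Everything else is a direct substitution of the definitions of $\delta$, $\nabla$, and $\frac{\partial}{\partial g_i}$ together with the canonical-form identity from Lemma \ref{lemmalipman}.
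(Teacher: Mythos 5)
Your proposal is correct and follows essentially the same route as the paper's proof: both compute $\delta\nabla+\nabla\delta$ directly on a coefficient times a constant form, cancel the two families of off-diagonal cross terms against each other, and identify the surviving diagonal contribution as $|N|+k$ via the Euler-type identity $\sum_j\frac{\partial}{\partial g_j}(q)g_j=\sum_N|N|\sigma(\bar{q_N})g^N$ together with the form degree $k$. The only cosmetic difference is that you work monomial-by-monomial and read the partial of $\sigma(\bar{q_N})g^{N+e_{i_j}}$ directly off the canonical form of Lemma \ref{lemmalipman}, whereas the paper keeps the full expansion of $q$ and invokes the special product rule for $\frac{\partial}{\partial g_p}(qg_{i_\ell})$ from Remark \ref{remproduct}.
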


\begin{proof}
Let $q=\sum_{N}\sigma(\bar{q_N})g^N$.  We begin by computing 
\begin{align*}
     &(\delta\nabla+\nabla\delta)(\sum_{N}\sigma(\bar{q_N})g^N\otimes dg_{i_1}...dg_{i_k}) \\
     &=\delta\Bigg(\sum_{j=1}^s\frac{\partial}{\partial g_j}(q)\otimes dg_jdg_{i_1}...dg_{i_k}\Bigg)+\nabla\Bigg(q\otimes\Bigg(\sum_{\ell=1}^k(-1)^{\ell+1}g_{i_{\ell}}dg_{i_1}\dots\widehat{dg_{i_{\ell}}}\dots dg_{i_k}\Bigg)\Bigg) \\      
   	&= \sum_{j=1}^s\delta\Bigg(\frac{\partial}{\partial g_j}(q)\otimes dg_jdg_{i_1}...dg_{i_k}\Bigg)+\sum_{\ell=1}^k(-1)^{\ell+1}\nabla\Big(qg_{i_{\ell}}\otimes\big(dg_{i_1}\dots\widehat{dg_{i_{\ell}}}\dots dg_{i_k}\big)\Big)\\
     &=\sum_{j=1}^s\Bigg(\frac{\partial}{\partial g_j}(q)\otimes \Bigg(g_j dg_{i_1}...dg_{i_k}+\sum_{m=1}^k(-1)^mg_{i_m}dg_{j}dg_{i_1}\dots\widehat{dg_{i_m}}\dots dg_{i_k}\Bigg) \\
     &+\sum_{\ell=1}^k(-1)^{\ell+1}\Big(\sum_{p=1}^s\frac{\partial}{\partial g_p}(qg_{i_{\ell}})\otimes\big(dg_{p}dg_{i_1}\dots\widehat{dg_{i_{\ell}}}\dots dg_{i_k}\big)\Big)\\
     &=\sum_{j=1}^s\frac{\partial}{\partial g_j}(q)g_j\otimes  dg_{i_1}...dg_{i_k}+\sum_{j=1}^s\sum_{m=1}^k(-1)^m\frac{\partial}{\partial g_j}(q)g_{i_m}\otimes dg_jdg_{i_1}\dots\widehat{dg_{i_m}}\dots dg_{i_k} \\
     &+\sum_{\ell=1}^k\sum_{p=1}^s(-1)^{\ell+1}\frac{\partial}{\partial g_p}(q)g_{i_{\ell}}\otimes dg_{p}dg_{i_1}\dots\widehat{dg_{i_{\ell}}}\dots dg_{i_k}+\sum_{\ell=1}^k(-1)^{\ell+1}q\otimes dg_{i_{\ell}}dg_{i_1}\dots\widehat{dg_{i_{\ell}}}\dots dg_{i_k}\\
\end{align*}
where the last equality follows from Remark \ref{remproduct}.  We note that the middle two sums cancel with each other and we are left with the equality 
\begin{align*}
(\delta\nabla+\nabla\delta)(\sum_{N}\sigma(\bar{q_N})g^N\otimes dg_{i_1}...dg_{i_k})
&=\sum_{j=1}^s\frac{\partial}{\partial g_j}(q)g_j\otimes  dg_{i_1}...dg_{i_k}+kq\otimes dg_{i_1}\dots dg_{i_k}.
\end{align*}
But we have that
\begin{align*}
\sum_{j=1}^s\frac{\partial}{\partial g_j}(q)g_j&=\sum_{j=1}^s\sum_{N}N_j\sigma(\bar{q_N})g^{N-e_j}g_j \\
&=\sum_{N}\sum_{j=1}^sN_j\sigma(\bar{q_N})g^N \\
&=\sum_{N}|N|\sigma(\bar{q_N})g^N,
\end{align*} 
and thus we get
\begin{align*}
(\delta\nabla+\nabla\delta)(\sum_{N}\sigma(\bar{q_N})g^N\otimes dg_{i_1}...dg_{i_k})=\sum_{N}(|N|+k)\sigma(\bar{q_N})g^N\otimes dg_{i_1}\dots dg_{i_k}.
\end{align*}
Hence, by the above equality and the fact that $\sigma$ is $k$-linear, we have 
\begin{align*}
(\delta\nabla+\nabla\delta)^{-1}&(\sum_{N}\sigma(\bar{q_N})g^N\otimes dg_{i_1}...dg_{i_k})=(\delta\nabla+\nabla\delta)^{-1}\Bigg(\sum_{N}\frac{|N|+k}{|N|+k}\sigma(\bar{q_N})g^N\otimes dg_{i_1}...dg_{i_k}\Bigg) \\
&=(\delta\nabla+\nabla\delta)^{-1}\Bigg(\sum_{N}(|N|+k)\sigma\Big(\bar{\frac{q_N}{|N|+k}}\Big)g^N\otimes dg_{i_1}...dg_{i_k}\Bigg) \\
&=\Bigg(\sum_{N}\sigma\Big(\bar{\frac{q_N}{|N|+k}}\Big)g^N\Bigg)\otimes dg_{i_1}...dg_{i_k} \\
&=\Bigg(\sum_N\frac{1}{|N|+k}\sigma(\bar{q_N})g^N\Bigg)\otimes dg_{i_1}...dg_{i_k}
\end{align*}
as desired.
\end{proof}

Based on the lemma, we establish some notation which we will use throughout the rest of the section.  

\begin{defn}
Let $q=\sum_{N}\sigma(\bar{q_N})g^N$ be an element of $Q$.  We define $\widehat{\frac{\partial}{\partial g_j}}$ by
\begin{align*}
\widehat{\frac{\partial}{\partial g_j}}\big(q\thinspace\big)=\sum_{N}\frac{N_j}{|N|}\sigma(\bar{q_N})g^{N-e_j}.
\end{align*}
\end{defn}

We are now ready to express explicit formulas for the generators of Koszul homology.

\begin{thm}\label{mainthm}
Let $Q$ be a Noetherian $k$-algebra where $k$ is a field of characteristic zero and let $\underline{g}$ be a regular sequence in $Q$ such that $Q/(\underline{g})$ is a finite dimensional $k$-vector space. If $M$ is a finitely generated $\underline{g}$-weak complete intersection module over $Q$ with free resolution $(F,\partial_F)$ as in Definition \ref{defwci}, then a $Q/(\underline{g})$-basis of $H_{\ell}(\underline{g};M)$ is given by the homology classes of the elements 
\begin{align*}
z_{\ell_{j_1}}=\sum_{k_1=1}^s...\sum_{k_{\ell}=1}^s\sum_{j_2=1}^{b_{\ell-1}}...\sum_{j_{\ell}=1}^{b_1}\overline{\widehat{\frac{\partial}{\partial g_{k_{\ell}}}}\Bigg(f_{1,j_{\ell}}^1\widehat{\frac{\partial}{\partial g_{k_{\ell-1}}}}\Bigg(f_{j_{\ell},j_{\ell-1}}^2...\widehat{\frac{\partial}{\partial g_{k_{2}}}}\Big(f_{j_{3},j_{2}}^{\ell-1}\widehat{\frac{\partial}{\partial g_{k_{1}}}}\big(f_{j_{2},j_{1}}^{\ell}\big)\Big)...\Bigg)\Bigg)}dg_{k_1}...dg_{k_{\ell}}
\end{align*} 
for $j_1=1,...,b_{\ell}$, where $h_1^i,...,h_{b_i}^i$ is a basis for $F_i$ and $\partial_F(h_{p}^i)=\sum_{m=1}^{b_{i-1}}f_{m,p}^ih_{m}^{i-1}$. 
\end{thm}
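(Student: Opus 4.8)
The plan is to realize $H(\underline{g};M)$ as the homology of the total complex $\text{Tot}(F\underset{Q}{\otimes}K)$, where $K=Q\underset{k[\underline{g}]}{\otimes}\Omega_{k[\underline{g}]/k}$, and to extract explicit cycles by perturbing the de Rham datum of Lemma~\ref{lemmadr}. Concretely, I would start from the special deformation retract datum of Lemma~\ref{lemmadr}, whose big complex carries only the Koszul differential $(0,\delta)$, and switch the resolution differential back on as a perturbation $\epsilon=\partial_F\otimes 1$ (up to the total-complex sign), so that $(0,\delta)+\epsilon$ is the genuine total differential $D$ of $F\underset{Q}{\otimes}K$. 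Two checks make the Perturbation Lemma applicable: $(\,(0,\delta)+\epsilon\,)^2=D^2=0$ is automatic, and the perturbation is small because $\epsilon(1\otimes H_\nabla)$ strictly raises Koszul degree (bounded by $s$) and strictly lowers the $F$-homological degree (bounded by $\pd_Q M<\infty$ from Proposition~\ref{propfpd}(1)); hence $\epsilon(1\otimes H_\nabla)$ is nilpotent and $(\text{Id}-\epsilon(1\otimes H_\nabla))^{-1}=\sum_{n\ge 0}(\epsilon(1\otimes H_\nabla))^n$ is a finite sum.

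Applying the Perturbation Lemma, I would first observe that the perturbed differential on the small complex is $\widetilde{\partial_F}=(1\otimes\pi)A(1\otimes\sigma)=0$: every summand of $A=\sum_{n\ge0}(\epsilon(1\otimes H_\nabla))^n\epsilon$ either reaches positive Koszul degree, where $1\otimes\pi$ vanishes, or stays in Koszul degree $0$ with coefficients in $(\underline{g})$, where $\pi$ again vanishes, using $\text{Im}\,\partial_F\subseteq(\underline{g})F$ from Definition~\ref{defwci}. This recovers the zero differential predicted by the isomorphisms (1) and shows that $\tilde{i}=(1\otimes\sigma)+(1\otimes H_\nabla)A(1\otimes\sigma)$ is a homotopy equivalence. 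Consequently the classes of $\tilde{i}(h^{\ell}_{j_1}\otimes 1)$, for $j_1=1,\dots,b_{\ell}$, form a $Q/(\underline{g})$-basis of $H_{\ell}(\text{Tot}(F\underset{Q}{\otimes}K))\cong H_{\ell}(\underline{g};M)$.

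To land the cycles inside the Koszul complex $M\underset{Q}{\otimes}K$ itself, I would compose $\tilde{i}$ with the augmentation edge map $F\underset{Q}{\otimes}K\to M\underset{Q}{\otimes}K$, which kills $F_p\underset{Q}{\otimes}K_q$ for $p>0$; the resulting class is automatically $\delta$-closed since the edge map is a chain map. Writing $\tilde{i}=\sum_m(\,(1\otimes H_\nabla)\epsilon\,)^m(1\otimes\sigma)$ and noting that each $\epsilon$ drops the $F$-degree by one while each $1\otimes H_\nabla$ raises the Koszul degree by exactly one (the connection $\nabla$ wedges on a single $dg$, as in the computation inside Lemma~\ref{lemmaconst}), the operator $(1\otimes H_\nabla)\epsilon$ moves bidegree $(p,q)\mapsto(p-1,q+1)$. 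Hence the unique summand of $\tilde{i}(h^{\ell}_{j_1}\otimes 1)$ surviving in bidegree $(0,\ell)$ is $(\,(1\otimes H_\nabla)\epsilon\,)^{\ell}(1\otimes\sigma)(h^{\ell}_{j_1}\otimes 1)$, and this is the desired Koszul representative.

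The remaining, and most delicate, step is to expand this $\ell$-fold composite in closed form. Each application of $\epsilon=\partial_F$ introduces a structure constant via $\partial_F(h^i_p)=\sum_m f^i_{m,p}h^{i-1}_m$, and each application of $1\otimes H_\nabla$ introduces, by Lemma~\ref{lemmaconst}, a partial together with a fresh $dg_k$ and a scaling factor; reducing the $F_0$-coefficient modulo $(\underline{g})$ (via $F_0\to M$ and the identification $H_{\ell}\cong F_{\ell}/(\underline{g})F_{\ell}$ from the proof of Proposition~\ref{propfpd}(1)) and reorganizing the nested expression with the product rule of Lemma~\ref{lemmaproduct} and the commutation relation of Lemma~\ref{lemmamixed} should reproduce the nested $\widehat{\frac{\partial}{\partial g}}$ formula of the statement. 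I expect the main obstacle to be bookkeeping the scalar normalizations: the iterated de Rham contraction produces the Koszul-degree-dependent factor $\tfrac{1}{|N|+k}$ at the $k$-th stage (Lemma~\ref{lemmaconst}), whereas the operator $\widehat{\frac{\partial}{\partial g}}$ carries the uniform factor $\tfrac{1}{|N|}$, so these must be matched across the $\ell$ nested applications. Since the conclusion only asserts that the resulting classes form a \emph{basis}, the accumulated normalization (and any product-rule correction terms) is ultimately absorbed by an invertible change of basis, which is the point I would be most careful to verify, together with confirming that $\operatorname{char}k=0$ is exactly what keeps every factor $\tfrac{1}{|N|+k}$ meaningful.
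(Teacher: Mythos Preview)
Your strategy matches the paper's: perturb the datum of Lemma~\ref{lemmadr} by $\partial_F\otimes 1$, verify smallness via nilpotence of $\partial_F(1\otimes H_\nabla)$, note that the perturbed differential on $F\otimes_Q Q/(\underline g)$ vanishes because $\text{Im}\,\partial_F\subseteq(\underline g)F$, and read off the $(0,\ell)$-component of $\tilde i(h^{\ell}_{j_1}\otimes 1)$ as the Koszul cycle. Two points, however, need correction.

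First, Lemma~\ref{lemmadr} carries the hypothesis that $Q$ is $(\underline g)$-adically complete, which you have not arranged. The paper's proof opens with precisely this reduction: pass to $\widehat Q$ and $\widehat M$, use flatness of completion together with $(\underline g)\subseteq\operatorname{ann}_Q H_\ell(\underline g;M)$ to identify $H_\ell(\underline g;M)\cong\widehat{H_\ell(\underline g;M)}\cong H_\ell(\underline g;\widehat M)$, and only then invoke the de Rham datum. Without completeness the expansion of Lemma~\ref{lemmalipman}, the connection $\nabla$, and the homotopy $H_\nabla$ are unavailable, so this step is not optional.

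Second, your plan to absorb the scalar normalizations and product-rule corrections into an ``invertible change of basis'' does not prove the theorem as stated: the statement specifies the cycles $z_{\ell_{j_1}}$ exactly, so you must compute $((1\otimes H_\nabla)\partial_F)^{\ell}(h^\ell_{j_1}\otimes 1\otimes 1)$ on the nose, not up to an unspecified transformation. The paper does this by direct iteration, with no appeal to Lemma~\ref{lemmamixed} or Lemma~\ref{lemmaproduct} at this stage (those enter only in Corollaries~\ref{cor1} and~\ref{cor2}); each application of $H_\nabla$ contributes one nested $\widehat{\partial/\partial g}$ via Lemma~\ref{lemmaconst}. The paper also records that $\widetilde{1\otimes\pi}=1\otimes\pi$ is $Q$-linear, so the induced isomorphism on homology agrees with the $Q$-linear isomorphism~(1); this is what guarantees the images of the fixed basis vectors form a $Q/(\underline g)$-basis rather than merely a $k$-basis, a point your outline leaves implicit.
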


\begin{proof}
We first reduce to the case where $Q$ is complete with respect to the $(\underline{g})$-adic topology.  Suppose that we can find such a basis for $H(\underline{g};\widehat{M}^{(\underline{g})})$ over the completion $\widehat{Q/(\underline{g})}^{(\underline{g})}$.  Thus we have such a basis for
\begin{align*}
\widehat{H_{\ell}(\underline{g};M)}^{(\underline{g})}\cong H_{\ell}(\underline{g};M)\otimes\widehat{Q}^{(\underline{g})}\cong H_{\ell}(\underline{g};M\otimes\widehat{Q}^{(\underline{g})})\cong H_{\ell}(\underline{g};\widehat{M}^{(\underline{g})})
\end{align*} 
by flatness.  But 
\begin{align*}
\widehat{H_{\ell}(\underline{g};M)}^{(\underline{g})}=\underset{\underset{t}{\longleftarrow}}{\lim}\thinspace H_{\ell}(\underline{g};M)/(\underline{g})^tH_{\ell}(\underline{g};M)=H_{\ell}(\underline{g};M)
\end{align*}
where the last equality follows from the fact that $(\underline{g})\subseteq\text{ann}_Q H_{\ell}(\underline{g};M)$.  Thus we get the desired basis of $H_{\ell}(\underline{g};M)$.  
\newline
\indent Now we may assume that $Q$ is complete and thus by Lemma \ref{lemmadr} we get a special deformation retract datum
\begin{align*}
\Big((F\underset{Q}{\otimes} Q/(\underline{g}),0)\underset{1\otimes\sigma}{\overset{1\otimes\pi}{\leftrightarrows}}(\text{Tot}(F\underset{Q}{\otimes}Q\underset{k[\underline{g}]}{\otimes}\Omega_{k[\underline{g}]/k}),\delta), \thinspace 1\otimes H_{\nabla}\Big).
\end{align*} 
We note that $\partial_F\otimes 1\colon\text{Tot}(F\underset{Q}{\otimes}Q\underset{k[\underline{g}]}{\otimes}\Omega_{k[\underline{g}]/k})\rightarrow\text{Tot}(F\underset{Q}{\otimes}Q\underset{k[\underline{g}]}{\otimes}\Omega_{k[\underline{g}]/k})$, which we write as $\partial_F$ for ease of notation, is a perturbation of the special deformation retract datum above.  Indeed, the degree of $\partial_F$ is $-1$ which is the same as the degree of $\delta$ and $(\partial_F+\delta)^2=0$ since it is the differential on the total complex.  Also, since by the proof of Proposition \ref{propfpd}, $F\otimes Q/(\underline{g})$ is a finite complex, say $F_i\otimes Q/(\underline{g})=0$ for all $i>r$, we have the following equalities
\begin{align*}
\Big(1&-\partial_F(1\otimes H_{\nabla})\Big)\Big(1+\partial_F(1\otimes H_{\nabla})+\big(\partial_F(1\otimes H_{\nabla})\big)^2+...+\big(\partial_F(1\otimes H_{\nabla})\big)^r\Big) \\
&=1+\partial_F(1\otimes H_{\nabla})+\big(\partial_F(1\otimes H_{\nabla})\big)^2+...+\big(\partial_F(1\otimes H_{\nabla})\big)^r \\
&-\Big(\partial_F(1\otimes H_{\nabla})+\big(\partial_F(1\otimes H_{\nabla})\big)^2+...+\big(\partial_F(1\otimes H_{\nabla})\big)^{r+1}\Big) \\
&=1-\big(\partial_F(1\otimes H_{\nabla})\big)^{r+1}=1.
\end{align*} 
Hence, we have that
\begin{align*}
\big(1-\partial_F(1\otimes H_{\nabla})\big)^{-1}=1+\partial_F(1\otimes H_{\nabla})+\big(\partial_F(1\otimes H_{\nabla})\big)^2+...+\big(\partial_F(1\otimes H_{\nabla})\big)^r
\end{align*}
is invertible and thus $\partial_F$ is small.  By the perturbation lemma, the perturbed datum is a deformation retract datum.  In particular, we have the homotopy equivalence
\begin{align*}
    (F\underset{Q}{\otimes}Q/(\underline{g}),\tilde{0})\underset{\widetilde{1\otimes\sigma}}{\overset{\widetilde{1\otimes\pi}}{\leftrightarrows}}(\text{Tot}(F\underset{Q}{\otimes}Q\underset{k[\underline{g}]}{\otimes}\Omega_{k[\underline{g}]/k}),\delta+\partial_F).
\end{align*}
We note that $\delta+\partial_F$ is the usual differential on the total complex and that the perturbed map 
\begin{align*}
    \tilde{0}&=0+(1\otimes\pi)A(1\otimes\sigma) \\
    &=(1\otimes\pi)(1+\partial_F(1\otimes H_{\nabla})+\big(\partial_F(1\otimes H_{\nabla})\big)^2+...+\big(\partial_F(1\otimes H_{\nabla})\big)^r)\partial_F(1\otimes\sigma) \\
    &=(1\otimes\pi)\partial_F(1\otimes\sigma)
\end{align*}
where the last equality follows from the fact that $1\otimes\pi$ composed with $(\partial_F(1\otimes H_{\nabla}))^i$ for $i>0$ is zero.  Now since $\text{Im}\,\partial_F\subseteq(\underline{g})$, we get that $\pi$ composed with $\partial_F$ is zero, thus $\tilde{0}=0$, which gives the homotopy equivalence
\begin{align*}
    F\underset{Q}{\otimes}Q/(\underline{g})\overset{\widetilde{1\otimes\sigma}}{\longrightarrow}\text{Tot}(F\underset{Q}{\otimes}Q\underset{k[\underline{g}]}{\otimes}\Omega_{k[\underline{g}]/k})
\end{align*}
where 
\begin{align*}
    \widetilde{1\otimes\sigma}&=(1\otimes\sigma)+(1\otimes H_{\nabla})A(1\otimes\sigma) \\
    &=(1\otimes\sigma)+(1\otimes H_{\nabla})(1+\partial_F(1\otimes H_{\nabla})+\big(\partial_F(1\otimes H_{\nabla})\big)^2+...+\big(\partial_F(1\otimes H_{\nabla})\big)^r)\partial_F(1\otimes\sigma) \\
    &=(1\otimes\sigma)+(1\otimes H_{\nabla})\partial_F+((1\otimes H_{\nabla})\partial_F)^2+...+((1\otimes H_{\nabla})\partial_F)^r.
\end{align*}
We also note that $\widetilde{1\otimes\pi}$ is just the $Q$-linear map $1\otimes\pi$ since it sends all elements to zero except ones lying in the first row of the double complex. 
Thus, the induced map on homology 
\begin{align*}
F\underset{Q}{\otimes}Q/(\underline{g})\overset{\cong}{\rightarrow}\text{Tor}^Q(R,Q/(\underline{g}))\cong H(R\underset{Q}{\otimes}K(\underline{g};Q))\cong H(\underline{g};R)
\end{align*}
is an isomorphism whose inverse is induced by the $Q$-linear map $1\otimes\pi$, and hence agrees with the $Q$-linear isomorphism (1).  As a result, a basis for $H(\underline{g};R)$ is given by first applying $\widetilde{1\otimes\sigma}$ to the basis elements of $F{\otimes}Q/(\underline{g})$ and then taking homology classes of the results modulo $I$. 
\paragraph
\indent To this end, we compute 
\begin{align*}
((1\otimes H_{\nabla})\partial_F)^{\ell}&(h_{j_1}^{\ell}\otimes 1\otimes 1)=((1\otimes H_{\nabla})\partial_F)^{\ell-1}(1\otimes H_{\nabla})\Big(\sum_{j_2=1}^{b_{\ell-1}}f_{j_2,j_1}^{\ell}h_{j_2}^{\ell-1}\otimes 1\otimes 1\Big) \\
&=\sum_{j_2=1}^{b_{\ell-1}}((1\otimes H_{\nabla})\partial_F)^{\ell-1}(1\otimes H_{\nabla})(h_{j_2}^{\ell-1}\otimes f_{j_2,j_1}^{\ell}\otimes 1) \\
&=\sum_{j_2=1}^{b_{\ell-1}}((1\otimes H_{\nabla})\partial_F)^{\ell-1}(h_{j_2}^{\ell-1}\otimes \sum_{k_1=1}^s\widehat{\frac{\partial}{\partial g_{k_1}}}(f_{j_2,j_1}^{\ell})\otimes dg_{k_1}) \\
&=\sum_{k_1=1}^s\sum_{j_2=1}^{b_{\ell-1}}((1\otimes H_{\nabla})\partial_F)^{\ell-1}(h_{j_2}^{\ell-1}\otimes \widehat{\frac{\partial}{\partial g_{k_1}}}(f_{j_2,j_1}^{\ell})\otimes dg_{k_1})
\end{align*}
Applying this procedure $\ell-1$ more times, we get that
\begin{align*}
&((1\otimes H_{\nabla})\partial_F)^{\ell}(h_{j_1}^{\ell}\otimes 1\otimes 1)= \\
&-\sum_{k_1=1}^s...\sum_{k_{\ell}=1}^s\sum_{j_2=1}^{b_{\ell-1}}...\sum_{j_{\ell}=1}^{b_1}\Bigg(1\otimes\widehat{\frac{\partial}{\partial g_{k_{\ell}}}}\Bigg(f_{1,j_{\ell}}^1\widehat{\frac{\partial}{\partial g_{k_{\ell-1}}}}\Bigg(f_{j_{\ell},j_{\ell-1}}^2...\widehat{\frac{\partial}{\partial g_{k_{1}}}}\big(f_{j_{2},j_{1}}^{\ell}\big)...\Bigg)\Bigg)\otimes dg_{k_1}...dg_{k_{\ell}}\Bigg)
\end{align*}
which completes the proof.
\end{proof}

We now give useful versions of these formulas for some special cases of interest.  For this we establish some notation.  We denote by $\frac{\partial(f_1,...,f_i)}{\partial(g_{k_1},...,g_{k_i})}$ the determinant of the $i\times i$ matrix $(\frac{\partial}{\partial g_{k_j}}(f_{\ell}))_{j,\ell}$.  

\begin{defn}
We call $f_{i,j}^k$ a \textit{homogeneous polynomial in} $\underline{g}$ \textit{with coefficients in} $Q$ if there is an integer $n$ such that
\begin{align*}
f_{i,j}^k=\sum_{N}\sigma(\overline{{f_{i,j}^k}_N)}g^N
\end{align*}
for $N=(n_1,...,n_s)\in\mathbb{N}^s$ satisfying $n_1+...+n_s=n$.  In this case, we call $n$ the $\underline{g}$\textit{-degree} of $f_{i,j}^k$ and we denote it by $d_{i,j}^k$. 
\end{defn}

\begin{cor}\label{cor1}
Let $Q$ be a Noetherian $k$-algebra with $k$ a field of characteristic zero and let $\underline{g}$ be a regular sequence in $Q$ such that $Q/(\underline{g})$ is a finite-dimensional $k$-vector space. If $M$ is a $\underline{g}$-weak complete intersection $Q$-module with free resolution $(F,\partial_F)$ such that the entries in the matrices $\partial_F$ are homogeneous polynomials in $\underline{g}$ with coefficients in $Q$ , then a $Q/(\underline{g})$-basis of $H_{\ell}(\underline{g};M)$ is given by the homology classes of the elements 
\begin{align*}
&z_{\ell_{j_1}}=\sum_{j_2=1}^{b_{\ell-1}}...\sum_{j_{\ell}=1}^{b_1}D_{j_1,...,j_{\ell}}\Bigg(\sum_{1\leq k_1<...<k_{\ell}\leq s}\overline{\frac{\partial(f_{1,j_{\ell}}^1,f_{j_{\ell},j_{\ell-1}}^2,...,f_{j_2,j_1}^{\ell})}{\partial(g_{k_1},...,g_{k_{\ell}})}} \\
&+\sum_{k_1=1}^s...\sum_{k_{\ell}=1}^s\sum_{m=2}^{\ell}\prod_{n=m+1}^{\ell}\overline{\frac{\partial}{\partial g_{k_n}}(f_{j_{n+1},j_n}^{\ell-n+1})\sum_{M,N}\frac{\partial}{\partial g_{k_m}}\Big(\sigma\big(\overline{{f_{j_{m+1},j_m}^{\ell-m+1}}_M}\big)\sigma\big(\overline{y_{(m-1)_{j_1}}}_N\big)\Big)g^{M+N}}\Bigg)dg_{k_1}...dg_{k_{\ell}}
\end{align*}
for $j_1=1,...,b_{\ell}$, where
\begin{align*}
z_{(m-1)_{j_1}}=\sum_{k_1=1}^s...\sum_{k_{m-1}=1}^s\sum_{j_2=1}^{b_{m-2}}...\sum_{j_{m-1}=1}^{b_1}D_{j_1,...,j_{m-1}}y_{(m-1)_{j_1}}dg_{k_1}...dg_{k_{m-1}}
\end{align*}
and where
\begin{align*}
D_{j_1,...,j_{\ell}}=\frac{1}{d_{j_2,j_1}^{\ell}}\cdot\frac{1}{d_{j_3,j_2}^{\ell-1}+d_{j_2,j_1}^{\ell}-1}\cdot...\cdot\frac{1}{d_{1,j_{\ell}}^{1}+d_{j_{\ell},j_{\ell-1}}^{2}+...+d_{j_2,j_1}^{\ell}-\ell+1}.
\end{align*}
\end{cor}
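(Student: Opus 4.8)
The plan is to start from the explicit generators furnished by Theorem \ref{mainthm} and rewrite the nested hatted partials under the homogeneity hypothesis, separating a Jacobian part from a part measuring the failure of the product rule. The key elementary observation is that if $q$ is homogeneous in $\underline g$ of $\underline g$-degree $n$, then comparing the definitions of $\widehat{\frac{\partial}{\partial g_j}}$ and $\frac{\partial}{\partial g_j}$ gives $\widehat{\frac{\partial}{\partial g_j}}(q)=\tfrac{1}{n}\frac{\partial}{\partial g_j}(q)$, and moreover $\widehat{\frac{\partial}{\partial g_j}}(q)$ is again homogeneous of $\underline g$-degree $n-1$. First I would use this to convert each innermost hatted partial in the formula of Theorem \ref{mainthm} into a scalar multiple of an ordinary partial $\frac{\partial}{\partial g_j}$, and then induct on $\ell$, peeling off one partial at a time.

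The degree bookkeeping is what produces the prefactor $D_{j_1,\ldots,j_\ell}$. Tracking leading $\underline g$-degrees through the nest, the innermost argument $f_{j_2,j_1}^\ell$ has degree $d_{j_2,j_1}^\ell$, so the first hatted partial contributes $\tfrac{1}{d_{j_2,j_1}^\ell}$; after multiplying by $f_{j_3,j_2}^{\ell-1}$ and applying the next hatted partial one divides by $d_{j_3,j_2}^{\ell-1}+d_{j_2,j_1}^\ell-1$; continuing, the $m$-th stage divides by $d^{\ell-m+1}+\cdots+d^\ell-(m-1)$, and the product of all these reciprocals is exactly $D_{j_1,\ldots,j_\ell}$. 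With the hatted partials replaced by ordinary partials, I would expand the nested expression $\frac{\partial}{\partial g_{k_\ell}}(f_{1,j_\ell}^1\frac{\partial}{\partial g_{k_{\ell-1}}}(\cdots))$ using the product rule of Lemma \ref{lemmaproduct}. Wedging against $dg_{k_1}\cdots dg_{k_\ell}$ and summing over all $k_1,\ldots,k_\ell$ extracts the alternating part: any term in which two distinct partials $\frac{\partial}{\partial g_{k_a}},\frac{\partial}{\partial g_{k_b}}$ both land on the same factor is symmetric in $k_a,k_b$ by Lemma \ref{lemmamixed} and is therefore annihilated, so only the terms distributing the $\ell$ partials bijectively among the $\ell$ factors survive, and they assemble into the determinant $\frac{\partial(f_{1,j_\ell}^1,\ldots,f_{j_2,j_1}^\ell)}{\partial(g_{k_1},\ldots,g_{k_\ell})}$, giving the first sum.

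The remaining terms come from the correction summand in Lemma \ref{lemmaproduct}, which measures the failure of $\sigma$ to be multiplicative. At the $m$-th stage of the induction, applying the product rule to $f_{j_{m+1},j_m}^{\ell-m+1}\cdot y_{(m-1)_{j_1}}$, where $y_{(m-1)_{j_1}}$ denotes the inner nested expression recorded by the recursively defined $z_{(m-1)_{j_1}}$, produces the term $\sum_{M,N}\frac{\partial}{\partial g_{k_m}}(\sigma(\overline{{f_{j_{m+1},j_m}^{\ell-m+1}}_M})\sigma(\overline{y_{(m-1)_{j_1}}}_N))g^{M+N}$, which the outer partials $\frac{\partial}{\partial g_{k_{m+1}}},\ldots,\frac{\partial}{\partial g_{k_\ell}}$ then differentiate, landing in the surviving non-symmetric branch on the explicit factors $f_{j_{n+1},j_n}^{\ell-n+1}$ and producing the product $\prod_{n=m+1}^\ell\frac{\partial}{\partial g_{k_n}}(f_{j_{n+1},j_n}^{\ell-n+1})$. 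Summing these over $m=2,\ldots,\ell$ yields the second sum, and the whole expression acquires the common prefactor $D_{j_1,\ldots,j_\ell}$ and reduces mod $(\underline g)$ to give the stated $z_{\ell_{j_1}}$.

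I expect the main obstacle to be the simultaneous bookkeeping of the reciprocal-degree factors and the correction terms, specifically verifying that a single prefactor $D_{j_1,\ldots,j_\ell}$ can be pulled out in front of both the Jacobian and the correction contributions. The difficulty is that products of homogeneous elements are not homogeneous, since $\sigma$ is only $k$-linear, so the intermediate expressions $y_{(m-1)_{j_1}}$ spread across several $\underline g$-degrees; one must check that the homogeneity hypothesis on the entries of $\partial_F$ keeps the leading degrees adding exactly as in the denominators of $D_{j_1,\ldots,j_\ell}$, while the higher-degree spread is precisely what is captured, stage by stage, in the correction sum. Carefully setting up the induction so that $z_{(m-1)_{j_1}}$ is the exact object that reappears at the next stage is what makes this accounting close up.
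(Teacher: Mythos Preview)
Your proposal is correct and follows essentially the same route as the paper: start from Theorem~\ref{mainthm}, replace each $\widehat{\partial/\partial g_j}$ by $\frac{1}{d}\,\partial/\partial g_j$ using homogeneity, accumulate the reciprocals into the prefactor $D_{j_1,\ldots,j_\ell}$, then iterate the product rule of Lemma~\ref{lemmaproduct} and use Lemma~\ref{lemmamixed} to kill the symmetric pieces, leaving the Jacobian determinant plus the correction sums. You even flag the one delicate point the paper passes over quickly---that products of $\underline g$-homogeneous elements need not be $\underline g$-homogeneous, so pulling a single scalar $D_{j_1,\ldots,j_\ell}$ outside the entire nest requires the higher-degree spill to be absorbed into the correction terms---which is exactly the bookkeeping that makes the argument close.
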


\begin{proof}
We first note that for a $\underline{g}$-homogeneous polynomial, $f_{i,j}^k$ we have that
\begin{align*}
    \widehat{\frac{\partial}{\partial g_j}}(f_{i,j}^k)=\frac{1}{d_{i,j}^k}\frac{\partial}{\partial g_j}(f_{i,j}^k).
\end{align*}
Thus, since $\frac{\partial}{\partial g_j}$ reduces the $\underline{g}$-degree of its argument by one whenever it is nonzero, the formulas from Theorem \ref{mainthm} become $z_{\ell_{j_1}}=$
\begin{align}
    \sum_{k_1=1}^s...\sum_{k_{\ell}=1}^s\sum_{j_2=1}^{b_{\ell-1}}...\sum_{j_{\ell}=1}^{b_1}D_{j_1,...,j_{\ell}}\frac{\partial}{\partial g_{k_{\ell}}}\Bigg(f_{1,j_{\ell}}^1\frac{\partial}{\partial g_{k_{\ell-1}}}\Bigg(f_{j_{\ell},j_{\ell-1}}^2...\frac{\partial}{\partial g_{k_{1}}}\big(f_{j_{2},j_{1}}^{\ell}\big)...\Bigg)\Bigg)dg_{k_1}...dg_{k_{\ell}}.  
\end{align}
where we omit the bar for ease of exposition.  It is now clear that we have the desired formula for $z_{1_{j_1}}$.  To obtain the desired formula for $z_{\ell_{j_1}}$, we apply the product rule from Lemma \ref{lemmaproduct} to (2), to get 
\begin{align*}
    \sum_{k_1=1}^s&...\sum_{k_{\ell}=1}^s\sum_{j_2=1}^{b_{\ell-1}}...\sum_{j_{\ell}=1}^{b_1}D_{j_1,...,j_{\ell}}\Bigg(\frac{\partial}{\partial g_{k_{\ell}}}\big(f_{1,j_{\ell}}^1\big)\frac{\partial}{\partial g_{k_{\ell-1}}}\Bigg(f_{j_{\ell},j_{\ell-1}}^2...\frac{\partial}{\partial g_{k_{1}}}\big(f_{j_{2},j_{1}}^{\ell}\big)...\Bigg) \\
    &+f_{1,j_{\ell}}^1\frac{\partial}{\partial g_{k_{\ell}}}\Bigg(\frac{\partial}{\partial g_{k_{\ell-1}}}\Bigg(f_{j_{\ell},j_{\ell-1}}^2...\frac{\partial}{\partial g_{k_{1}}}\big(f_{j_{2},j_{1}}^{\ell}\big)...\Bigg)\Bigg) \\
    &+\sum_{M,N}\frac{\partial}{\partial g_{k_{\ell}}}\Bigg(\sigma\big(\overline{f_{1,j_{\ell}}^1}_M\big)\sigma\Bigg(\overline{\frac{\partial}{\partial g_{k_{\ell-1}}}\Bigg(f_{j_{\ell},j_{\ell-1}}^2...\frac{\partial}{\partial g_{k_{1}}}\big(f_{j_{2},j_{1}}^{\ell}\big)...\Bigg)}_N\Bigg)\Bigg)g^{M+N}\Bigg)dg_{k_1}...dg_{k_{\ell}}.
\end{align*}
But by Lemma \ref{lemmamixed}, we have that 
\begin{align*}
    \sum_{k_1=1}^s&...\sum_{k_{\ell}=1}^s\frac{\partial}{\partial g_{k_{\ell}}}\Bigg(\frac{\partial}{\partial g_{k_{\ell-1}}}\Bigg(f_{j_{\ell},j_{\ell-1}}^2...\frac{\partial}{\partial g_{k_{1}}}\big(f_{j_{2},j_{1}}^{\ell}\big)\Big)...\Bigg)\Bigg)dg_{k_1}...dg_{k_{\ell}}=0,
\end{align*}
which gives the following formula for $z_{\ell_{j_1}}$
\begin{align*}
     &\sum_{k_1=1}^s...\sum_{k_{\ell}=1}^s\sum_{j_2=1}^{b_{\ell-1}}...\sum_{j_{\ell}=1}^{b_1}D_{j_1,...,j_{\ell}}\Bigg(\frac{\partial}{\partial g_{k_{\ell}}}\big(f_{1,j_{\ell}}^1\big)\frac{\partial}{\partial g_{k_{\ell-1}}}\Bigg(f_{j_{\ell},j_{\ell-1}}^2...\frac{\partial}{\partial g_{k_{1}}}\big(f_{j_{2},j_{1}}^{\ell}\big)...\Bigg) \\
     &+\sum_{M,N}\frac{\partial}{\partial g_{k_{\ell}}}\Bigg(\sigma\big(\overline{f_{1,j_{\ell}}^1}_M\big)\sigma\Bigg(\overline{\frac{\partial}{\partial g_{k_{\ell-1}}}\Bigg(f_{j_{\ell},j_{\ell-1}}^2...\frac{\partial}{\partial g_{k_{1}}}\big(f_{j_{2},j_{1}}^{\ell}\big)...\Bigg)}_N\Bigg)\Bigg)g^{M+N}\Bigg)dg_{k_1}...dg_{k_{\ell}}.
\end{align*}
Now we apply the product rule repeatedly, simplifying at each step as above to obtain
\begin{align*}
    &\sum_{k_1=1}^s...\sum_{k_{\ell}=1}^s\sum_{j_2=1}^{b_{\ell-1}}...\sum_{j_{\ell}=1}^{b_1}D_{j_1,...,j_{\ell}}\Bigg(\frac{\partial}{\partial g_{k_{\ell}}}\big(f_{1,j_{\ell}}^1\big)\frac{\partial}{\partial g_{k_{\ell-1}}}\big(f_{j_{\ell},j_{\ell-1}}^1\big)...\frac{\partial}{\partial g_{k_1}}\big(f_{j_{2},j_{1}}^{\ell}\big) \\
    &+\sum_{m=2}^{\ell}\prod_{n=m+1}^{\ell}\overline{\frac{\partial}{\partial g_{k_n}}(f_{j_{n+1},j_n}^{\ell-n+1})\sum_{M,N}\frac{\partial}{\partial g_{k_m}}\Big(\sigma\big(\overline{{f_{j_{m+1},j_m}^{\ell-m+1}}_M}\big)\sigma\big(\overline{y_{(m-1)_{j_1}}}_N\big)\Big)g^{M+N}}\Bigg)dg_{k_1}...dg_{k_{\ell}}
\end{align*}
where 
\begin{align*}
    y_{(m-1)_{j_1}}=\frac{\partial}{\partial g_{k_{m-1}}}\Bigg(f_{j_{m},j_{m-1}}^{\ell-m+2}...\frac{\partial}{\partial g_{k_{1}}}\big(f_{j_{2},j_{1}}^{\ell}\big)...\Bigg).
\end{align*}
We observe that the first part of the above formula can be written as a determinant, thus giving the desired formulas.
\end{proof}

We now give formulas in the case that the entries in the matrices given by $\partial_F$ are homogeneous polynomials in $\underline{g}$ with coefficients in $k$ rather than $Q$.

\begin{cor}\label{cor2}
Let $Q$ be a Noetherian $k$-algebra with $k$ a field of characteristic zero and let $\underline{g}=g_1,...,g_s$ be a regular sequence in $Q$. If $M$ is a $\underline{g}$-weak complete intersection module over $Q$ with free resolution $(F,\partial_F)$ such that the entries in the matrices $\partial_F$ are homogeneous polynomials in $\underline{g}$ with coefficients in $k$, then a $Q/(\underline{g})$-basis of $H_{\ell}(\underline{g};M)$ is given by the homology classes of the elements
\begin{align*}
    z_{\ell_{j_1}}&=\sum_{j_2=1}^{b_{\ell-1}}...\sum_{j_{\ell}=1}^{b_1}D_{j_1,...,j_{\ell}}\sum_{1\leq k_1<...<k_{\ell}\leq s}\overline{\frac{\partial(f_{1,j_{\ell}}^1,f_{j_{\ell},j_{\ell-1}}^2,...,f_{j_2,j_1}^{\ell})}{\partial(g_{k_1},...,g_{k_{\ell}})}}dg_{k_1}...dg_{k_{\ell}}
\end{align*}
for $j_1=1,...,b_{\ell}$.
\end{cor}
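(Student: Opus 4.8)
The plan is to obtain Corollary \ref{cor2} as the specialization of Corollary \ref{cor1} to the case in which the canonical coefficients $\sigma(\overline{{f_{i,j}^k}_N})$ of each entry of $\partial_F$ lie in the subfield $k\subseteq Q$ rather than merely in $Q$. Under this hypothesis the formula of Corollary \ref{cor1} applies, and it consists of two pieces: a Jacobian-determinant piece $\sum_{1\le k_1<\cdots<k_\ell\le s}\overline{\partial(f_{1,j_\ell}^1,\dots,f_{j_2,j_1}^\ell)/\partial(g_{k_1},\dots,g_{k_\ell})}$, and a correction piece assembled from sums of the form $\sum_{M,N}\frac{\partial}{\partial g_{k_m}}\big(\sigma(\overline{f_M})\,\sigma(\overline{y_N})\big)g^{M+N}$, where $f$ abbreviates an entry $f_{j_{m+1},j_m}^{\ell-m+1}$ of $\partial_F$ and $y$ an intermediate partial expression $y_{(m-1)_{j_1}}$. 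I would show that under the present hypothesis every correction term vanishes, so that $z_{\ell_{j_1}}$ collapses onto the determinant piece, which is precisely the asserted formula, with the scalars $D_{j_1,\dots,j_\ell}$ unaffected.

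The key observation is that in each correction term the distinguished left factor is an entry of the differential $\partial_F$, whose coefficients $\sigma(\overline{f_M})$ therefore lie in $k$ by hypothesis. This is exactly the situation governed by Remark \ref{remproduct}: when one of the two factors can be written with coefficients in $k$, the extra summand in the product rule of Lemma \ref{lemmaproduct} drops out, i.e. $\frac{\partial}{\partial g_{k_m}}\big(\sigma(\overline{f_M})\,\sigma(\overline{y_N})\big)=0$. Concretely, pulling the field element $\sigma(\overline{f_M})$ through the $k$-linear map $\frac{\partial}{\partial g_{k_m}}$ leaves $\sigma(\overline{f_M})\,\frac{\partial}{\partial g_{k_m}}\big(\sigma(\overline{y_N})\big)$, and $\frac{\partial}{\partial g_{k_m}}$ annihilates any element of the image of $\sigma$, since such an element is of $\underline{g}$-degree zero. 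Thus each correction term is zero termwise, with no need to track the coefficients of the intermediate expressions $y_{(m-1)_{j_1}}$.

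With the correction terms gone, $z_{\ell_{j_1}}$ reduces to the determinant piece of Corollary \ref{cor1}, which is the formula claimed in Corollary \ref{cor2}. The point most warranting care is the indexing bookkeeping of Corollary \ref{cor1}: I would verify that the distinguished left factor of every correction term is genuinely an entry of $\partial_F$ (hence carries $k$-coefficients), rather than one of the derived expressions $y_{(m-1)_{j_1}}$, so that Remark \ref{remproduct} applies uniformly across all $m$ and $\ell$. I do not anticipate a serious obstacle, since the wedge-to-determinant passage—the only substantive computation—was already executed in the proof of Corollary \ref{cor1}; if one instead wished to argue directly from Theorem \ref{mainthm} (for example to drop the finite-dimensionality of $Q/(\underline g)$ that is invoked in Corollary \ref{cor1}), then the antisymmetrization of $dg_{k_1}\cdots dg_{k_\ell}$ into the Jacobian determinant would become the main step to reproduce, while the vanishing of the correction terms would occur identically by Remark \ref{remproduct}.
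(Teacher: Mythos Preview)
Your approach is correct and matches the paper's own proof, which simply says the result follows from Corollary~\ref{cor1} together with Remark~\ref{remproduct}. One small correction to your closing aside: arguing directly from Theorem~\ref{mainthm} would not let you drop the finite-dimensionality hypothesis on $Q/(\underline{g})$, since that theorem also assumes it---its omission from the statement of Corollary~\ref{cor2} appears to be an oversight in the paper rather than a genuine relaxation.
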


\begin{proof}
This follows immediately from Corollary \ref{cor1} and the fact that $\frac{\partial}{\partial g_j}$ satisfies the usual product rule in the case that the coefficients involved in the product are elements of $k$ as discussed in Remark \ref{remproduct}.
\end{proof}

\begin{remk}
Taking $\underline{g}=x_1,...,x_n$ to be the minimal generators of the maximal ideal, the corollary recovers the formulas given by Herzog in \cite{MR1310371}.  
\end{remk}

We finish this section by giving an example which illustrates our results.

\begin{ex}
Let $Q=k[x,y,z]$, $\underline{g}$ be the regular sequence $g_1=x^2+yz$, $g_2=y^3$, $g_3=z^5$, and let $I=(x^2y^4+y^5z+xz^{10}, y^6, x^4y^2+x^2y^3z)$.  Let us compute the generators of $H_{\ell}(\underline{g};R)$, where $R=Q/I$.  We begin by fixing a $k$-linear splitting of $\pi:Q\rightarrow Q/(\underline{g})$.  One can find the following basis for $Q/(\underline{g})$ as a $k$-vector space either by hand or using Macaulay2
\begin{align*}
    &\bar{1}, \thinspace\overline{x}, \thinspace\overline{xy}, \thinspace\overline{xy^2}, \thinspace\overline{xy^2z}, \thinspace\overline{xy^2z^2}, \thinspace\overline{xy^2z^3}, \thinspace\overline{xy^2z^4}, \thinspace\overline{xyz}, \thinspace\overline{xyz^2}, \thinspace\overline{xyz^3}, \thinspace\overline{xyz^4}, \thinspace\overline{xz}, \thinspace\overline{xz^2}, \\
    &\thinspace\overline{xz^3}, \thinspace\overline{xz^4}, \thinspace\overline{y}, \thinspace\overline{y^2}, \thinspace\overline{y^2z}, \thinspace\overline{y^2z^2}, \thinspace\overline{y^2z^3}, \thinspace\overline{y^2z^4}, \thinspace\overline{yz}, \thinspace\overline{yz^2}, \thinspace\overline{yz^3}, \thinspace\overline{yz^4}, \thinspace\overline{z}, \thinspace\overline{z^2}, \thinspace\overline{z^3}, \thinspace\overline{z^4}.
\end{align*}
We choose the splitting $\sigma(\bar{a})=a$ for every basis element $\bar{a}$.  According to Macaulay2, a free resolution of $R$ over $Q$ is given by
\begin{align*}
    0\rightarrow Q\overset{\partial_3}{\longrightarrow} Q^3\overset{\partial_2}{\longrightarrow} Q^3\overset{\partial_1}{\longrightarrow} Q\rightarrow R\rightarrow 0
\end{align*}
where the differentials are given by the following matrices:
\begin{align*}
\partial_3=
\left[
\begin{array}{c}
z^{10}+xy^4 \\
-y^4 \\
x^3+xyz
\end{array}
\right],\thinspace\thinspace\thinspace
&\partial_2=
\left[
\begin{array}{c c c}
-y^4 & -z^{10}-xy^4 & 0 \\
x^4+x^2yz & -x^3yz-xy^2z^2 & -xz^{10}-x^2y^4-y^5z \\
0 & x^3y^2+xy^3z & y^6
\end{array}
\right], \\
\vspace{0.025cm} \\
\partial_1=&
\left[
\begin{array}{c c c}
x^4y^2+x^2y^3z & y^6 & x^2y^4+y^5z+xz^{10} \\
\end{array}
\right].
\end{align*}
We now write the entries in the differentials as in Lemma \ref{lemmalipman}.  Note for example that the first entry in $\partial_1$ can be written as follows:
\begin{align*}
    x^4y^2+x^2y^3z=x^2y^2g_1
\end{align*}
but note that $x^2y^2$ is not in the image of our splitting map $\sigma$, so we write
\begin{align*}
    x^4y^2+x^2y^3z&=x^2y^2g_1 \\
    &=(g_1-yz)y^2g_1 \\
    &=y^2g_1^2-y^3zg_1 \\
    &=y^2g_1^2-zg_1g_2
\end{align*}
and we can see that the coefficients are now in the image of $\sigma$.  Using a similar procedure on the other entries, we obtain the following matrices
\begin{align*}
    \partial_3=
\left[
\begin{array}{c}
xyg_2+g_3^2 \\
-yg_2 \\
xg_1
\end{array}
\right],\thinspace\thinspace\thinspace
&\partial_2=
\left[
\begin{array}{c c c}
-yg_2 & -xyg_2-g_3^2 & 0 \\
g_1^2-yzg_1 & -xyzg_1 & -yg_1g_2-xg_3^2 \\
0 & xy^2g_1 & g_2^2
\end{array}
\right], \\
\vspace{0.025cm} \\
\partial_1=&
\left[
\begin{array}{c c c}
y^2g_1^2-zg_1g_2 & g_2^2 & yg_1g_2+xg_3^2 \\
\end{array}
\right].
\end{align*}
Applying Theorem \ref{mainthm}, we get the following set of elements whose homology classes generate $H_1(\underline{g};R)$,
\begin{align*}
    \widetilde{h_1^1}&=\overline{y^2g_1-zg_2}dg_1-\frac{1}{2}\overline{zg_1}dg_2=\overline{x^2y}dg_1-\frac{1}{2}\overline{x^2z+yz^2}dg_2 \\
    \widetilde{h_2^1}&=\overline{g_2}dg_2=\overline{y^3}dg_2 \\
    \widetilde{h_3^1}&=\frac{1}{2}\overline{yg_2}dg_1+\frac{1}{2}\overline{yg_1}dg_2+\overline{xg_3}dg_3=\frac{1}{2}\overline{y^4}dg_1-\frac{1}{2}\overline{x^2y+y^2z}dg_2+\overline{xz^5}dg_3
\end{align*}
where $\widetilde{h_i^j}$ is the generator which corresponds to the the basis element $h_i^j$ of $F_j$.  We get the following generators for $H_2(\underline{g};R)$
\begin{align*}
    \widetilde{h_1^2}&=-\frac{1}{2}\overline{y^4z}dg_1dg_2 \\
    \widetilde{h_2^2}&=\big(-\frac{1}{2}\overline{xy^4z}-\frac{1}{3}\overline{z^{11}}\big)dg_1dg_2-\frac{1}{3}\overline{y^3z^6}dg_1dg_3+\frac{1}{3}\overline{x^2z^6+yz^7}dg_2dg_3 \\
    \widetilde{h_3^2}&=-\frac{1}{3}\overline{y^7}dg_1dg_2
\end{align*}
and for $H_3(\underline{g};R)$
\begin{align*}
    \widetilde{h_1^3}&=\big(\frac{1}{6}\overline{x^2y^3z^5}+\frac{2}{3}\overline{y^4z^6}+\frac{1}{9}\overline{x^2z^5}-\frac{1}{18}\overline{yz^6}\big)dg_1dg_2dg_3.
\end{align*}
\end{ex}

\vspace{0.25cm}

\section{Applications to Weak Complete Intersection Ideals}
\paragraph
\indent Let $Q$ be a Noetherian $k$-algebra with $k$ a field of characteristic zero and let $\underline{g}=g_1,...,g_s$ be a regular sequence in $Q$.  In this section, we use the explicit formulas for generators of Koszul homology given in the previous section to study the ideal $(\underline{g})$ of the quotient $R=Q/I$, where $I$ is a $\underline{g}$-weak complete intersection ideal of $Q$.  In particular, we are interested in answering the following question.  

\begin{quest}
When is the ideal $(\underline{g})$ of $R$ a weak complete intersection ideal?
\end{quest}

In Proposition \ref{propapp}, we provide a general condition under which $(\underline{g})$ is a weak complete intersection ideal in $R$, which expands the class of known examples of weak complete intersection ideals.  However, there are examples of ideals $(\underline{g})$ which are not weak complete intersection ideals in quotients by $\underline{g}$-weak complete intersection ideals.  We discuss one example in the following remark.  

\begin{remk}
The ideal $(\underline{g})$ of $R=Q/I$ in Example \ref{expowersvars} is not a weak complete intersection ideal.  One can verify this by looking at the beginning of the minimal free resolution of $R/(\underline{g})$ over $R$ on Macaulay2.  There are entries in the differentials which are not elements of $(\underline{g})$, for example $y^2$ is one such entry. 
\end{remk}

In order to study this question further, we will need the following definition; see for example \cite[Definition 2.7]{RAHMATI2018129} or \cite[Remark 5.2.1]{MR2641236}.

\begin{defn}
Let $R$ be a local ring and assume $H_{\ell}(\underline{g};R)$ is a free $R/(g)$-module for all $\ell$.  We say that $K(\underline{g};R)$ admits a \textit{trivial Massey operation} if for some basis $\mathcal{B}=\{h_{\lambda}\}_{\lambda\in\Lambda}$ of $H_{\geq 1}(\underline{g};R)$, there is a function
\begin{align*}
\mu:\bigsqcup_{n=1}^{\infty}\mathcal{B}^n\rightarrow K(\underline{g};R) 
\end{align*}
such that $\mu(h_{\lambda})=z_{\lambda}$ is a cycle with cls$(z_{\lambda})=h_{\lambda}$ and
\begin{align}
\partial^K\mu(h_{\lambda_1},...,h_{\lambda_p})=\sum_{j=1}^{p-1}\overline{\mu(h_{\lambda_1},...,h_{\lambda_j})}\mu(h_{\lambda_{j+1}},...,h_{\lambda_p})
\end{align}
where $\overline{a}=(-1)^{|a|+1}a$.
\end{defn}

Now we establish some notation.

\begin{defn}
Let $I$ be an ideal in $Q$.  We define $\frac{\partial}{\partial g}(I)$ to be the ideal generated by the elements $\{\frac{\partial}{\partial g_j}(f)|f\in I, j=1,...,s\}$. 
\end{defn}

The following result sheds some light on the question stated above.  See \cite[Theorem 3.5]{MR3633302} and \cite[Theorem 1.1]{HERZOG201389} for similar results regarding Golod rings and modules.

\begin{prop}\label{propapp}
Let $Q$ be a local Noetherian $k$-algebra with $k$ a field of characteristic zero and let $\underline{g}$ be a regular sequence in $Q$ such that $Q/(\underline{g})$ is a finite-dimensional $k$-vector space.  If $I$ is a $\underline{g}$-weak complete intersection ideal of $Q$ and $\frac{\partial}{\partial g}(I)^2\subseteq I$, then $(\underline{g})$ is a weak complete intersection ideal in $R=Q/I$. 
\end{prop}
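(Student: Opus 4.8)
The plan is to exhibit a trivial Massey operation on the Koszul complex $K(\underline{g};R)$ and then invoke the characterization underlying \cite{RAHMATI2018129}: once $K(\underline{g};R)$ carries a trivial Massey operation, the associated Golod-type resolution of $R/(\underline{g})$ over $R$ witnesses $(\underline{g})$ as a weak complete intersection ideal. The ground is already prepared. By the computation in the proof of Proposition \ref{propfpd} we have $H_{\ell}(\underline{g};R)\cong F_{\ell}\otimes_Q Q/(\underline{g})$, so each $H_{\ell}(\underline{g};R)$ is free over $R/(\underline{g})$, and Theorem \ref{mainthm} supplies both an $R/(\underline{g})$-basis of $H_{\geq 1}(\underline{g};R)$ and honest cycle representatives $z_{\ell_{j_1}}$. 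I would take these as the values of $\mu$ on one-element tuples, setting $\mu(h_{\ell_{j_1}})=z_{\ell_{j_1}}$.

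The crucial point is that, after reduction modulo $I$, every coefficient of each cycle $z_{\ell_{j_1}}$ lies in $\frac{\partial}{\partial g}(I)$. Indeed, the outermost operation producing such a coefficient has the shape $\widehat{\frac{\partial}{\partial g_{k_{\ell}}}}\big(f^1_{1,j_{\ell}}\,w\big)$, where $f^1_{1,j_{\ell}}$ is an entry of $\partial_1$ and hence an element of $I$. Expanding by the product rule of Lemma \ref{lemmaproduct} and reducing modulo $I$, the term in which the partial lands on $w$ retains the factor $f^1_{1,j_{\ell}}\in I$ and therefore dies, while the surviving term is a multiple of $\frac{\partial}{\partial g_{k_{\ell}}}(f^1_{1,j_{\ell}})\in\frac{\partial}{\partial g}(I)$; since $\frac{\partial}{\partial g}(I)$ is an ideal, the full coefficient reduces into $\frac{\partial}{\partial g}(I)$. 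Granting this, the product of any two basis cycles has all of its coefficients in $\frac{\partial}{\partial g}(I)^2\subseteq I$, so $z_{\lambda}z_{\lambda'}=0$ in $K(\underline{g};R)$. I would then declare $\mu$ to be identically zero on all tuples of length at least two. The defining relation
\[
\partial^K\mu(h_{\lambda_1},\dots,h_{\lambda_p})=\sum_{j=1}^{p-1}\overline{\mu(h_{\lambda_1},\dots,h_{\lambda_j})}\,\mu(h_{\lambda_{j+1}},\dots,h_{\lambda_p})
\]
then holds for purely formal reasons: the left side is $0$, and for $p=2$ the single summand is a product $z_{\lambda}z_{\lambda'}=0$, while for $p\geq 3$ every summand already contains a $\mu$-factor evaluated on a tuple of length $\geq 2$, hence vanishes.

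I expect the coefficient claim of the second paragraph to be the main obstacle, and it has two delicate aspects. First, the $\underline{g}$-grading normalization built into $\widehat{\frac{\partial}{\partial g_j}}$ must be reconciled with the honest partials that generate $\frac{\partial}{\partial g}(I)$; this is immediate when the entries of $\partial_F$ are $\underline{g}$-homogeneous, since there $\widehat{\frac{\partial}{\partial g_j}}$ is just a scalar multiple of $\frac{\partial}{\partial g_j}$ (the setting of Corollaries \ref{cor1} and \ref{cor2}), and the general local case I would reduce to this graded situation degreewise. Second, the non-standard product rule of Lemma \ref{lemmaproduct} contributes correction terms of the form $\sum_{M,N}\frac{\partial}{\partial g_{k_{\ell}}}\big(\sigma(\overline{{f^1_{1,j_{\ell}}}_M})\,\sigma(\bar{w}_N)\big)g^{M+N}$, and one must verify that these too reduce into $\frac{\partial}{\partial g}(I)$ modulo $I$; the symmetry of mixed partials from Lemma \ref{lemmamixed} is precisely what collapses the iterated expansion here, exactly as in the proof of Corollary \ref{cor1}. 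Once the coefficient placement is secured, the products vanish, the Massey operation above is genuinely trivial, and the passage to the weak complete intersection property of $(\underline{g})$ in $R$ is the expected Golod-type resolution, whose differentials combine $\partial^K$ (which maps into $(\underline{g})$ since $\partial^K(dg_i)=g_i$) with the cycles $z_{\lambda}$, whose coefficients reduce into $(\underline{g})+I$.
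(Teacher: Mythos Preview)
Your approach is exactly the paper's: reduce to \cite[Theorem~2.9]{RAHMATI2018129} by exhibiting a trivial Massey operation on $K(\underline{g};R)$, take the cycles $z_{\ell_{j_1}}$ from Theorem~\ref{mainthm} as $\mu$ on singletons, set $\mu\equiv 0$ on longer tuples, and argue that all pairwise products $z_\lambda z_{\lambda'}$ vanish in $K(\underline{g};R)$ because their coefficients lie in $\frac{\partial}{\partial g}(I)^2\subseteq I$. The paper's proof is in fact shorter than yours: it simply asserts that the coefficients of each $z_\lambda$ lie in $\frac{\partial}{\partial g}(I)$ ``since the elements $f^1_{1,j_\ell}$ are the entries in the first differential \dots\ and hence are elements of $I$,'' and moves on. You are right to flag the passage from $\widehat{\frac{\partial}{\partial g_j}}$ to $\frac{\partial}{\partial g_j}$ and the correction terms in Lemma~\ref{lemmaproduct} as the places where care is needed---the paper does not spell these out---but the overall architecture and the invocation of \cite{RAHMATI2018129} are identical.
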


\begin{proof}
Since $(\underline{g})/(\underline{g})^2$ is a free $Q/(\underline{g})$-module and $H_{\ell}(\underline{g};R)$ is a free $Q/(\underline{g})$-module for every $i$, it suffices to show that $K(\underline{g};R)$ admits a trivial Massey operation by \cite[Theorem 2.9]{RAHMATI2018129}.\newline
\indent
We take $\mathcal{B}$ to be the basis of $H(\underline{g};R)$ given in Theorem \ref{mainthm} and lift these basis elements to cycles $z_{\lambda}$ using the formulas in the theorem.  We define $\mu(h_{\lambda_1},...,h_{\lambda_p})=0$ for all $p\geq 2$ and $h_{\lambda_i}\in\mathcal{B}$.  By Theorem \ref{mainthm}, we see that every $z_{\lambda}$ has coefficients in $\frac{\partial}{\partial g}(I)$, since the elements $f_{1,j_{\ell}}^1$ are the entries in the first differential in the minimal free resolution of $R$, and hence are elements of $I$.  Thus, the coefficients of $\mu(h_{\lambda_i})\mu(h_{\lambda_j})$ are elements of $\frac{\partial}{\partial g}(I)^2\subseteq I$ for all $i$ and $j$, so the products are zero in $K(\underline{g};R)$.  It is now easy to see that our definition of $\mu$ satisfies
\begin{align*}
\partial^K\mu(h_{\lambda_1},...,h_{\lambda_p})=\sum_{j=1}^{p-1}\overline{\mu(h_{\lambda_1},...,h_{\lambda_j})}\mu(h_{\lambda_{j+1}},...,h_{\lambda_p}).
\end{align*}
Thus, $\mu$ is a trivial Massey operation on $K(\underline{g};R)$, as desired. 
\end{proof}

The following example illustrates Proposition \ref{propapp} and shows that it produces new examples of weak complete intersection ideals.

\begin{ex}
Let $Q=k[x,y]$ with char\,$k=0$ and let $\underline{g}$ be the regular sequence $g_1=x^2+y^2$, $g_2=y^3$.  We consider the ideal $I=(g_1^2g_2, g_1^4, g_2^3)\subseteq Q$.  A free resolution of $Q/I$ is 
\begin{align*}
0\rightarrow Q^2\overset{\partial_2}{\longrightarrow} Q^3\overset{\partial_1}{\longrightarrow} Q\rightarrow R\rightarrow 0
\end{align*}
with differentials
\begin{align*}
&\partial_2=
\left[
\begin{array}{c c}
-g_1^2 & -g_2^2 \\
g_2 & 0 \\
0 & g_1^2
\end{array}
\right],\thinspace\thinspace\thinspace
\partial_1=
\left[
\begin{array}{c c c}
g_1^2g_2 & g_1^4 & g_2^3 \\
\end{array}
\right],
\end{align*}
so $I$ is a $\underline{g}$-weak complete intersection ideal.  The ideal $\frac{\partial}{\partial g}(I)$ is generated by the elements $g_1g_2, g_1^2, g_2^2$.  Indeed, elements of $I$ are of the form $f=ag_1^2g_2+bg_1^4+cg_2^3$ for $a,b,c\in Q$, and 
\begin{align*}
\frac{\partial}{\partial g_1}(f)&=2ag_1g_2+4bg_1^3+\frac{\partial}{\partial g_1}(c)g_2^3 \\
\frac{\partial}{\partial g_2}(f)&=ag_1^2+\frac{\partial}{\partial g_2}(b)g_1^4+3cg_2^2
\end{align*}
which are both elements of the ideal $(g_1g_2, g_1^2, g_2^2)$.  It is easy to check that $\frac{\partial}{\partial g}(I)^2\subseteq I$.  Thus, by Proposition \ref{propapp}, $(\underline{g})$ is a weak complete intersection ideal in $R=Q/I$.  One can also verify this fact by looking at a free resolution for $R/(\underline{g})$ over $R$.  
\end{ex}

The converse of Proposition \ref{propapp} is not true in general, as shown by the following example.

\begin{ex}\label{exconverse}
Let $Q=k[x,y,z]$ with char\,$k$=0 and let $\underline{g}$ be the regular sequence $g_1=x^2$, $g_2=y^3$, $g_3=z^5$.  The ideal $I=(x^2y^8,y^8z^9,x^3z^{14}+x^5y^5)\subseteq Q$ is a $\underline{g}$-weak complete intersection ideal; a free resolution of $R=Q/I$ being
\begin{align*}
0\rightarrow Q^2\overset{\partial_2}{\longrightarrow} Q^3\overset{\partial_1}{\longrightarrow} Q\rightarrow R\rightarrow 0
\end{align*}
with differentials
\begin{align*}
&\partial_2=
\left[
\begin{array}{c c}
-z^9 & -x^3y^5 \\
x^2 & -x^3z^5 \\
0 & y^8
\end{array}
\right],\thinspace\thinspace\thinspace
\partial_1=
\left[
\begin{array}{c c c}
x^2y^8 & y^8z^9 & x^3z^{14}+x^5y^5 \\
\end{array}
\right].
\end{align*}
By Theorem \ref{mainthm}, the homology classes of the elements $\{\tilde{h_1^1},\tilde{h_2^1},\tilde{h_3^1},\tilde{h_1^2},\tilde{h_2^2}\}$, where
\begin{align*}
\tilde{h_1^1}&=\frac{1}{3}\overline{y^8}dg_1+\frac{2}{3}\overline{x^2y^5}dg_2 \\
\tilde{h_2^1}&=\frac{2}{3}\overline{y^5z^9}dg_2+\frac{1}{3}\overline{y^8z^4}dg_3 \\
\tilde{h_3^1}&=\frac{1}{3}\overline{(xz^{14}+2x^3y^5)}dg_1+\frac{1}{3}\overline{x^5y^2}dg_2+\frac{2}{3}\overline{x^3z^9}dg_3 \\
\tilde{h_1^2}&=\frac{2}{3}\overline{y^5z^9}dg_1dg_2-\frac{2}{3}\overline{x^2y^5z^4}dg_2dg_3 \\
\tilde{h_2^2}&=\frac{1}{6}\overline{x^3y^{10}}dg_1dg_2-\frac{1}{6}\overline{x^2y^8z^4}dg_1dg_3+\frac{1}{3}\overline{x^3y^5z^9}dg_2dg_3 
\end{align*}
is a basis for $H_{\geq 1}(\underline{g};R)$.  To obtain a trivial Massey operation on $K(\underline{g};R)$, we first multiply the cycles
\begin{align*}
\tilde{h_1^1}\cdot\tilde{h_2^1}&=\frac{1}{9}\overline{y^{16}z^4}dg_1dg_3 \\
\tilde{h_1^1}\cdot\tilde{h_3^1}&=\frac{4}{9}\overline{x^5y^{5}z^9}dg_2dg_3 \\
\tilde{h_2^1}\cdot\tilde{h_3^1}&=\frac{2}{9}\overline{xy^{5}z^{23}}dg_1dg_2
\end{align*} 
and all multiplications involving $\tilde{h_1^2}$ and $\tilde{h_2^2}$ are zero.  So we define $\mu$ as follows
\begin{align*}
\mu([\tilde{h_1^1}],[\tilde{h_2^1}])=-\frac{1}{9}y^{13}z^4dg_1dg_2d_3 \\
\mu([\tilde{h_1^1}],[\tilde{h_3^1}])=\frac{4}{9}x^3y^{5}z^9dg_1dg_2d_3 \\
\mu([\tilde{h_2^1}],[\tilde{h_2^1}])=\frac{2}{9}xy^{5}z^{18}dg_1dg_2d_3
\end{align*}
where $[\cdot]$ denotes the homology class, and otherwise we define $\mu$ to be zero.  It is straightforward to check that $\mu$ satisfies (3), thus it is a trivial Massey operation.  Therefore, by \cite[Theorem 2.9]{RAHMATI2018129}, $(\underline{g})$ is a weak complete intersection ideal in $R$.  However, $y^{8}\in\frac{\partial}{\partial g}(I)$, thus $y^{16}$ is in $\frac{\partial}{\partial g}(I)^2$, but not in $I$.  This shows that $\frac{\partial}{\partial g}(I)^2\subseteq I$ is not a necessary condition for $(\underline{g})$ to be a weak complete intersection ideal in $R$.   
\end{ex}

\section*{\centering\normalsize{\normalfont{\uppercase{Acknowledgments}}}}
The author would like to thank her advisor, Claudia Miller, for all of her guidance, help, and support on this project.

\bibliographystyle{siam}
\bibliography{generatorsarxivv1}

\vspace{1.25cm}

\noindent Department of Mathematics, Syracuse University, Syracuse, NY, 13244
\newline
\textit{email}: rngettin@syr.edu

 \end{document}